\documentclass[11pt,a4paper,reqno]{amsart}  

\usepackage[utf8]{inputenc}
\usepackage[english]{babel}
\usepackage[margin=3cm]{geometry}

\parindent=0mm
\parskip=1.2mm

\usepackage[usenames,dvipsnames,svgnames,table]{xcolor}
\usepackage{float}
\usepackage{caption,subcaption}
\usepackage{tikz,pgf}

\usepackage{fancyhdr}
\usepackage{footmisc}
\usepackage{lipsum}

\usepackage{blindtext}

\usepackage{amsfonts,amsmath,amssymb,amsthm,mathtools,mathrsfs,bbm}
\usepackage{array,caption,subcaption,graphicx}
\usepackage[shortlabels]{enumitem}

\usepackage{hyperref}
\usepackage{xcolor}

\definecolor{unbleu}{rgb}{0.03, 0.15, 0.4}

\hypersetup{
pdfborder = {0 0 0},
colorlinks,
linkcolor=unbleu,
citecolor=unbleu,
urlcolor=unbleu
}

\usepackage{tikz-cd}

\theoremstyle{plain}
\newtheorem{theorem}{Theorem}[section]
\newtheorem*{theorem*}{Theorem}
\newtheorem{lemma}[theorem]{Lemma}
\newtheorem*{lemma*}{Lemma}
\newtheorem{proposition}[theorem]{Proposition}
\newtheorem*{proposition*}{Proposition}

\newtheorem*{corollary*}{Corollary}

\theoremstyle{definition}
\newtheorem{definition}[theorem]{Definition}

\newtheorem{remark}[theorem]{Remark}

\pagestyle{plain}

\newcommand{\modl}[1]{\lvert #1 \rvert}
\newcommand\given[1][]{\:#1\middle|\:}

\DeclarePairedDelimiter{\floor}{\lfloor}{\rfloor}

\setlist[enumerate]{topsep=0pt}
\setlist[itemize]{topsep=0pt}
\numberwithin{equation}{section}

\usepackage[T2A,T1]{fontenc}
\DeclareSymbolFont{cyrillic}{T2A}{cmr}{m}{n}
\DeclareMathSymbol{\D}{\mathalpha}{cyrillic}{196}

\def\R{\ensuremath{\mathbb R}}
\def\N{\ensuremath{\mathbb N}}

\def\Z{\ensuremath{\mathbb Z}}
\def\I{\ensuremath{\mathcal{I}}}
\def\e{{\ensuremath{\rm e}}}

\def\B{\ensuremath{\mathcal B}}

\def\p{\ensuremath{\mathbb P}}

\def\X{\mathbb{X}}
\def\ie{{\em i.e.}, }

\def\E{\mathbb E}

\def\TT{\sigma}
\def\m{\mathfrak{m}}

\def\1{{\bf 1}}

\def\W{\ensuremath{\mathscr W}}

\def\Ind{\ensuremath{{\mathbbm{1}}}}

\def\sm{\setminus}
\def\eps{\varepsilon}

\def\proj{p}
\def\Proj{P}

\makeatletter
\@namedef{subjclassname@2020}{\textup{2020} Mathematics Subject Classification}
\makeatother


\begin{document}

\title{A functional limit theorem for a dynamical system with an observable maximised on a Cantor set}

\author[R. Couto]{Raquel Couto}
\address{Raquel Couto\\ Centro de Matem\'{a}tica \& Faculdade de Ci\^encias da Universidade do Porto\\ Rua do
Campo Alegre 687\\ 4169-007 Porto\\ Portugal}
\email{\href{mailto:up201106786@fc.up.pt}{up201106786@fc.up.pt}}

\author[A. C. M. Freitas]{Ana Cristina Moreira Freitas}
\address{Ana Cristina Moreira Freitas\\ Centro de Matem\'{a}tica \&
Faculdade de Economia da Universidade do Porto\\ Rua Dr. Roberto Frias \\
4200-464 Porto\\ Portugal} \email{\href{mailto:amoreira@fep.up.pt}{amoreira@fep.up.pt}}
\urladdr{\url{http://www.fep.up.pt/docentes/amoreira/}}

\author[J. M. Freitas]{Jorge Milhazes Freitas}
\address{Jorge Milhazes Freitas\\ Centro de Matem\'{a}tica \& Faculdade de Ci\^encias da Universidade do Porto\\ Rua do
Campo Alegre 687\\ 4169-007 Porto\\ Portugal}
\email{\href{mailto:jmfreita@fc.up.pt}{jmfreita@fc.up.pt}}
\urladdr{\url{http://www.fc.up.pt/pessoas/jmfreita/}}

\author[M. Todd]{Mike Todd}
\address{Mike Todd\\ Mathematical Institute\\
University of St Andrews\\
North Haugh\\
St Andrews\\
KY16 9SS\\
Scotland \\} \email{\href{mailto:m.todd@st-andrews.ac.uk }{m.todd@st-andrews.ac.uk }}
\urladdr{\url{https://mtoddm.github.io/}}

\thanks{All authors were partially financed by Portuguese public funds through FCT  -- Funda\c{c}\~ao para a Ci\^encia e a Tecnologia, I.P., in the framework of the projects PTDC/MAT-PUR/4048/2021, 2022.07167.PTDC and UID/00144 - Centro de Matem\'atica da Universidade do Porto.  RC was supported by an FCT scholarship with Reference
Number PD/BD/150456/2019. }

\date{\today}

\keywords{Functional limit theorems, point processes, L\'evy processes with decorations, extremal processes, clustering, Cantor sets, hitting times} 
\subjclass[2020]{37A25, 37A50, 37B20,  60F17,  60G55, 60G70}

\begin{abstract}
We consider heavy-tailed observables maximised on a dynamically defined Cantor set and prove convergence of the associated point processes as well as functional limit theorems.  The Cantor structure, and its connection to the dynamics, causes clustering of large observations: this is captured in the `decorations' on our point processes and functional limits, an application of the theory developed in a paper by the latter three authors.
\end{abstract}

\maketitle

\section{Introduction}

Understanding the statistical properties of extreme events in dynamical systems is crucial across various scientific disciplines, notably in climate dynamics where accurate predictions of rare but impactful phenomena are paramount. This paper extends the study of extreme value theory and sums of heavy tailed observables which are maximised on fractal sets, paying particular attention to the clustering profiles of extreme observations.

The motivations to consider fractal maximal sets stem, in particular, from meteorology where the appearance of critical regions with a complex multifractal structure is common, such as in \cite{FMAY17}, where the anomalies for the precipitation frequency data are consistent with an underlying fractal or, in \cite{FAMR19}, where it was observed that the enhancement of greenhouse gases carries the attractor towards some sort of lower dimensional fractal structure. As for the relevance of understanding clustering, we mention its crucial role for assessing compound risks, such in the case of the 2022 European drought addressed in \cite{FPB23}.

The study of rare events for observables with fractal maximal sets is quite recent in the dynamical systems framework (\cite{MP16, FFRS20, FFS21}). In these papers, the main results prove the existence of distributional limits for the partial maxima of stochastic processes arising from uniformly expanding systems with these observables, which, of course, is related to the elapsed time the orbits take to enter small vicinities of the maximal fractal sets (see \cite{FFT10,FFT11}). 

Here, we generalise these results and prove the convergence of decorated point processes of rare events, which provide a more refined approach, aimed at better characterising how extreme events cluster and how their cumulative impact can be quantified. In particular, using the tools developed in \cite{FFT25}, we will also prove the existence of enriched functional limits for sums of heavy tailed observables with such fractal maximal sets.  We note that the theory in \cite{FFT25} is related to ideas in stochastic processes where richer spaces for the limiting process to converge in were developed, such as in \cite{BPS18} (see \cite{FFT25} for a fuller historical account).

We recall that the study sums of heavy tailed stochastic processes is closely intertwined with the extremal properties of the process because the behaviour of the sum is dominated by that of the extreme observations among the summands (\cite{LWZ81}). In particular, in order to prove convergence to a stable law or to a corresponding L\'evy process, the convergence of point processes is usually at the core of the approach (see for example \cite{T10, JPZ20, FFT25,CKM24}, in the dynamical setting, or earlier  works \cite{D83,R87,DH95} in the more classical statistical setting).  

Dynamically generated  heavy tailed stochastic processes can occur either because the observable is unbounded with a polynomial type of spike on the maximal set, or because the dynamics has some some source of non-hyperbolicity, like an indifferent periodic point or a cusp, where the dynamics is severely decelerated so that, after inducing,  there is a piling of observations that aggregate   making the induced observable heavy tailed. To our knowledge, the sources of heavy tailed behaviour in the literature have been taken either finite or at most countable, which is one of the reasons for the interest in considering fractal sets.

In order to be able to give concrete formulae and describe the point processes, particularly their decorations, we will consider uniformly hyperbolic systems (Markov full branched linear maps) and observables maximised on dynamically generated Cantor sets associated to these maps. We remark that although we work with simple models, they capture the essence of the limiting behaviour and we believe that the spirit of our findings should prevail in more general settings, where closed formulas would be impossible to obtain. 

One of the highlights of this paper lies in the construction of an observable function maximised on Cantor sets that still has a continuous distribution function. This is achieved by considering a carefully designed symbolic distance to the maximal set, which is necessary to apply the theory developed in \cite{FFT25} to obtain enriched functional limit theorems and the convergence of decorated point process of rare events. This contrasts with the Cantor ladder type of observables considered in \cite{MP16,FFRS20, FFS21}, which give rise to discrete distributions. 

The formal statements of the main results involve introducing objects and notations that would overload this introduction and for this reason we defer it to Section~\ref{subsec:statement}. Nonetheless, in the following subsections we will give an informal version of these results.

\subsection{Enriched functional limit theorems}

We will consider a finite full-branched interval map and a corresponding dynamically defined Cantor set.  Then, defining an observable maximised on this Cantor set, we obtain a sequence of stationary random variables $X_0, X_1, \ldots$ with an $\alpha$-heavy tail and 
prove a suitable functional limit theorem, namely find limits of 

$$S_n(t)=\sum_{i=0}^{\floor{nt}-1} \dfrac{1}{n^{\frac{1}{\alpha}}}X_i, \ t \in [0,1].$$

We will see that the limit, which will live in the space $F'$ of decorated c\`adl\`ag functions, introduced in \cite{FFT25}, can be seen as an $\alpha$ stable L\'evy process, denoted by $V$, with decorations at each jump, which capture the excursion performed by the finite time process $S_n$, when a cluster of abnormally high observations occurs (note that these correspond to visits of the orbit to close vicinities of the maximal (Cantor) set). In this case, the compatibility between the systems considered and the structure of the maximal (Cantor) set imply that the decoration corresponding to the jumping time, $s$, can be described by a  c\`adl\`ag function of the form:
\begin{equation*}
		e_V^{s}(t)=V(s^{-})+U^{-\frac{1}{\alpha}}\displaystyle\sum_{0 \leq j \leq \floor{\tan(\pi t/2)}} (1-\theta)^{\frac{j}{\alpha}}, \ t \in [0,1]
	\end{equation*}
	where $0<\theta<1$ is the extremal index, which gives the reciprocal of the average number of abnormal observations within a cluster and, in this case, is related to the decay rate of the measure of the iterative approximations of the maximal (Cantor) set, during its construction. In the case of the ternary Cantor set, $\theta=2/3$.

\subsection{Point processes with decorations}

The convergence of enriched functional limit theorems is derived from the convergence of point processes with decorations. Point processes are a very powerful tool to study rare events and, in particular, sums of heavy tailed observations. An illustration of their convenience is the fact that the L\'evy processes obtained here can be written as an integral with respect to a Poisson random measure that underlies the limiting point process.   

The construction of the point processes is based on splitting $n$ observations into $k_n$ blocks of size $r_n$, where  both $(k_n)_n, (r_n)_n$ diverge. Then we consider a point process with a time component keeping record of each block  and a decoration consisting of a vector of increasing length with all block observations conveniently normalised:  
\begin{equation*}
N_n=\sum_{i=1}^{\infty} \delta_{\left(i/k_n,\mathbb{X}_{n,i}\right)},
\end{equation*}
where $\mathbb{X}_{n,i}=\left(n(X_{(i-1)r_n})^{-\alpha},n(X_{(i-1)r_n+1})^{-\alpha},\ldots, n(X_{r_n-1})^{-\alpha}\right)$.

The main result establishes that these point processes after embedded in a proper space converge to a limiting process that can be written as a bidimensional Poisson process with Lebesgue intensity measure decorated with a  
bi-infinite sequence, which in this case is equal to $(1-\theta)^j$ for all $j\geq0 $ and $\infty$ for all $j\leq-1$.

\subsection{Structure of the paper}
In Section~\ref{sec:cantor-set} we outline our class of interval maps and the observables we will use, which are maximised on a dynamically defined Cantor set.  In Section~\ref{sec:FLTs} we explain the theory of functional limit theorems for heavy tailed systems with clustering.  This involved defining sequence spaces $l_\infty^+, \tilde l_\infty^+$ aligned to a `block structure', defining the anchored tail process, defining the dependence conditions $\D_{q_n}^{*}$ and $\D_{q_n}^{'*}$, defining Rare Events Point Processes and explaining what the enriched functional limit theorem in \cite{FFT25} is.  In Section~\ref{sec:application} we apply this theory to our class of examples.  This long section is broken down into the statement of the main theorems, defining our constants and normalisation sequences, the extremal index, checking the dependence requirements, showing the anchored tail process is well defined and finally dealing with the small jumps condition in the case $1<\alpha<2$, using ideas from \cite{CheNicTor24}.

\section{A Fractal Maximal Set}
\label{sec:cantor-set}
We divide $[0, 1]$ into interleaved domains $I_i$ and $J_i$, on each of which we define linear bijections to $[0, 1]$: the former intervals will define our dynamical attractor.

 Let $0\le a_0<a_1<\cdots< a_{2k_I-1}\le 1$ and $I_i= [a_{2(i-1)}, a_{2i-1}]$ for $i=1, \ldots, k_I$.  Next define $J_1, \ldots, J_{k_J}$ be the complementary (open) intervals in order where $k_J\in \{k_I-1, k_I, k_I+1\}$.  So $k_J=k_I-1$ if $a_0=0$ and $a_{2k_I-1}= 1$; $k_J=k_I+1$ if $a_0>0$ and $a_{2k_I-1}< 1$; and $k_J=k_I$ in the other two cases. Let $\hat k:=k_I+ k_J$.  Writing the $I_i$ and $J_i$s together in order, given $1\le k\le \hat k$ it will sometimes be convenient to let $(I,J)_k$ be the $k$-th interval in this list, so either some $I_i$ or some $J_i$.  Moreover, let $\I_I$ be the indices $k$ where $(I,J)_k=I_i$ for some $i$ and  $\I_J$ be the indices $k$ where $(I,J)_k=J_i$ for some $i$ (note $\{1, \ldots, \hat k\} = \I_I\cup \I_J$).

Let $g_i:I_i \to [0,1]$ and $h_j:J_j \to (0,1)$ be linear bijections (making the obvious change if $J_1$ contains 0 and/or $J_{k_J}$ contains 1).  We also assume that $|I_i|, |J_i|\le 1/2$ so that the derivatives of $g_i, h_i$ are greater than or equal to two. 
Let $m_i$ denote the absolute value of the derivative of $g_i$ and let $G:[0,1] \to [0,1]$ be such that the restriction of $G$ to $I_i$ is $g_i$ and to $J_i$ is $h_j$.

The collection of the intervals $I_i$ and $J_j$, with $i=1,\ldots, k_I$ and $j=1,\ldots,k_J$ forms a measurable partition of $[0,1]$ that we denote by $\mathcal P$. Then we define by recursion the sequence of partitions $\mathcal P^n=\bigvee_{j=0}^{n-1}G^{-j}(\mathcal P)$, given by the $n$-th join of $\mathcal P$. The elements of $\mathcal P^n$ are called $n$-cylinders and $n$ will be referred to as the \emph{depth} of the cylinder $\omega\in\mathcal P^n$. Note that, since $G$ is linearly expanding and full-branched, if $\omega_n(x)$ denotes the element of $\mathcal P^n$ that contains $x$ then $G^n(\omega_n(x))=[0,1]$ and $\cap_{n\in\N}\omega_n(x)=\{x\}$.

Let $\Lambda_0=[0,1]$ and, for all $n \in \mathbb{N}$, define
\begin{equation}\label{eq:Lambda_n}
\Lambda_n:=\bigcup_{i=1}^{k_I} g_i^{-1}(\Lambda_{n-1})
\end{equation}
and
\begin{equation}\label{eq:Lambda}
\Lambda:=\displaystyle\bigcap_{n \in \mathbb{N}} \Lambda_n.
\end{equation}

So $\Lambda$ can be thought of as a dynamically defined Cantor set.

A natural example of such a system, is for $k=2$, $I_1=\left[0,\dfrac{1}{3}\right]$, $J_1=\left(\dfrac{1}{3},\dfrac{2}{3}\right)$, $I_2=\left[\dfrac{2}{3},1\right]$ and $g_1(x)=3x$, $h_1(x)= 3x-1$ and $g_2(x) = 3x-2$.  Then $\Lambda$ here is the ternary/middle-$\frac{1}{3}$ Cantor set.

It is always the case that $([0,1],\mathcal{B}_{[0,1]},\m,G)$, where $\m$ denotes the Lebesgue measure on $[0,1]$, is a probability preserving dynamical system.
We will define an observable on this system so that the theory from \cite{FFT25} applies, and whose maximal set is precisely $\Lambda$. For that, we define a distance on $[0,1]$ as follows.

Let $\Sigma= \Sigma_{\hat k}=\{1,\ldots,\hat k\}^{\N}$ and let $\sigma:\Sigma \to \Sigma$ be the left-shift map.
For $x \in [0,1]$, let $\underline{x}=( x_1, x_2 \ldots)\in \Sigma$ be defined by $x_i=k\in \{1, \ldots, \hat k\}$ if $G^{i-1}(x)\in (I, J)_k$. 
Observe that $x \in \Lambda \iff \underline{x}=(x_1, x_2, \ldots)$ where $x_i\in \I_I$ for all $i \in \mathbb{N}$.

\begin{definition}\label{def:cantor-psi}
Let $\underline{x}  =(x_1, x_2, \ldots)\in \Sigma$. Let $\mathcal{I}_x=\{i \in \mathbb{N}: x_i \in \I_J\}$. Let
\begin{equation}\label{eq:cantor-psi}
\psi(x)=\sum_{i \in \mathcal{I}_x} 2^{-i}.
\end{equation}
\end{definition}
$\psi$ provides us with a notion of distance to $\Lambda$ such that the earlier a point exits the dynamical construction of $\Lambda$ (in the sense of (\ref{eq:Lambda_n})) the bigger the corresponding distance to $\Lambda$.  We will also view  $\psi$ is a $[0,1]$-valued random variable defined on $([0,1],\B[0,1],\m)$.

\begin{figure}
\includegraphics[width=14cm]{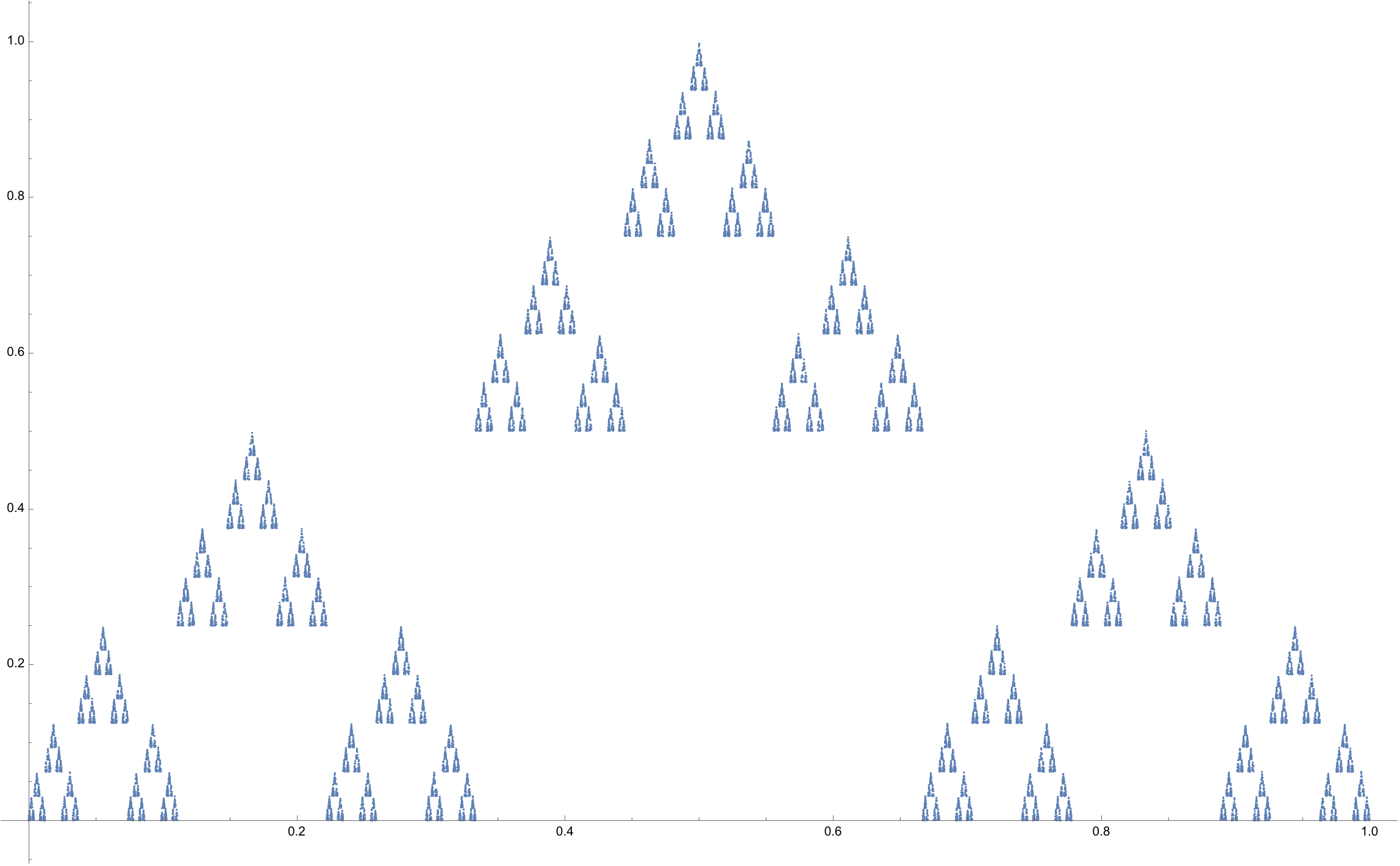}
\caption{Graph of the function $\psi$ for the ternary Cantor set.}
\label{fig:psi}
\end{figure}

\begin{lemma}\label{lem:proper-psi}
\begin{enumerate}[(a)]
\item $\psi(x)=0 \iff x \in \Lambda$;
\item $\psi(x)<2^{-n} \iff x \in \Lambda_n \setminus \{p_1,\dots,p_{k_I^n}\}$, where 
$$\{p_1,\dots,p_{k_I^n}\}=\left\{x \in [0,1]: x_i\in \I_I \text{ for all } i=1, \ldots n, \text{ and } x_i\in \I_J \text{ for } i\ge n+1 \right\};$$
\item for  $F:[0,1] \to [0,1]$ the distribution function of $\psi$, 
\begin{enumerate}[(i)]
\item $F$ is strictly increasing on $[0, 1]$;
\item $F(2^{-n}) = |\Lambda_n| = \lambda^n$ for $\lambda:= \left|\cup_{i=1}^{\I_I}I_i\right|= \sum_{i=1}^{\I_I}\frac1{m_i}$; 
\item we can view $F \circ \psi$ as a uniformly distributed random variable.
\end{enumerate}
\end{enumerate}
\end{lemma}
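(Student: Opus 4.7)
Parts (a) and (b) follow directly from viewing $\sum_{i \in \mathcal{I}_x} 2^{-i}$ as a binary expansion of $\psi(x)$ in which the $i$-th digit equals $\Ind_{a_i \in \I_J}$. For (a), $\psi(x)=0$ iff every digit vanishes iff $a_i \in \I_I$ for all $i$ iff $x \in \Lambda$. For (b), if some $a_j \in \I_J$ with $j \leq n$ then $\psi(x) \geq 2^{-j} \geq 2^{-n}$, so $\psi(x)<2^{-n}$ forces $x \in \Lambda_n$. Conversely, on $\Lambda_n$ one has $\psi(x) \leq \sum_{i>n} 2^{-i} = 2^{-n}$, with equality precisely when $a_i \in \I_J$ for every $i>n$, i.e.\ when $x \in \{p_1,\ldots,p_{k_I^n}\}$.

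For (c), the plan is to derive a self-similar functional equation for $F$ from the full-branched Markov structure of $G$, and then read off all three sub-parts. Setting $C := \bigcup_i I_i$ and $D := \bigcup_j J_j$, the definition of $\psi$ through the shifted symbolic sequence gives
\begin{equation*}
\psi(x) = \tfrac{1}{2}\psi(G(x)) \text{ on } C, \qquad \psi(x) = \tfrac{1}{2} + \tfrac{1}{2}\psi(G(x)) \text{ on } D.
\end{equation*}
Each branch of $G$ is a linear bijection onto $[0,1]$, so $G_*(\m|_C) = \lambda\,\m$ and $G_*(\m|_D) = (1-\lambda)\,\m$; pushing forward yields
\begin{equation*}
F(t) = \lambda F(2t),\ \ t \in [0,\tfrac{1}{2}]; \qquad F(t) = \lambda + (1-\lambda)F(2t-1),\ \ t \in [\tfrac{1}{2},1].
\end{equation*}
Iterating the first identity downward from $F(1)=1$ immediately gives $F(2^{-n}) = \lambda^n$, and the same quantity equals $|\Lambda_n|$ by an easy induction from (\ref{eq:Lambda_n}), proving (ii).

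The functional equation identifies $\nu := \psi_*\m$ as the Bernoulli$(\lambda,1-\lambda)$ measure on $[0,1]$ under binary expansion, equivalently the self-similar measure for the IFS $\{y \mapsto y/2,\ y \mapsto (y+1)/2\}$ with those weights. For any dyadic interval $[k\cdot 2^{-n},(k+1)\cdot 2^{-n}]$, iterating the functional equation $n$ times expresses its $\nu$-mass as a product of $n$ factors from $\{\lambda,1-\lambda\}$, hence strictly positive since $\lambda \in (0,1)$. Because dyadic intervals generate the topology on $[0,1]$, $\nu$ has full support and $F$ is strictly increasing, giving (i). The same products tend to $0$ as $n \to \infty$, so $\nu$ is atomless and $F$ is continuous; consequently $F \colon [0,1] \to [0,1]$ is a homeomorphism, and $\m(F\circ\psi \leq t) = \m(\psi \leq F^{-1}(t)) = F(F^{-1}(t)) = t$, proving (iii). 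The main delicate point will be (i): $\psi$ itself is very far from monotone, so strict monotonicity of $F$ is not obvious \emph{a priori} and relies crucially on the self-similarity of $\psi_*\m$ together with $\lambda \in (0,1)$.
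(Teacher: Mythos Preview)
Your proof is correct. For (a), (b) and (c)(iii) you essentially follow the paper, which dismisses (a) and (b) as immediate and derives (iii) by the same one-line computation $\p(F\circ\psi\le z)=F(F^{-1}(z))=z$. The genuine difference is in how you handle (c)(i) and (c)(ii). The paper obtains (ii) directly from (b): since $\{\psi<2^{-n}\}$ equals $\Lambda_n$ up to finitely many points, $F(2^{-n})=|\Lambda_n|=\lambda^n$ is immediate. For (i) the paper simply says strict monotonicity ``follows from the definition of $\psi$'' without further justification. You instead derive the functional equation $F(t)=\lambda F(2t)$ on $[0,\tfrac12]$ and $F(t)=\lambda+(1-\lambda)F(2t-1)$ on $[\tfrac12,1]$ from the relation $\psi=\tfrac12\psi\circ G$ on $C$ and $\psi=\tfrac12+\tfrac12\psi\circ G$ on $D$, identify $\psi_*\m$ as the Bernoulli$(\lambda,1-\lambda)$ measure on binary expansions, and read off both (ii) and full support (hence strict monotonicity) as consequences. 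This is more work but genuinely more informative: it explains \emph{why} $F$ is strictly increasing (every dyadic interval carries mass $\lambda^a(1-\lambda)^b>0$), whereas the paper's one-clause justification leaves that point unargued. Your observation that $\psi$ is highly non-monotone, so strict monotonicity of $F$ is not automatic, is well taken. The Bernoulli identification also gives continuity of $F$ for free, which is implicitly needed in the $F(F^{-1}(z))=z$ step of (iii).
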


\begin{proof}
The proofs of (a) and (b) are immediate.  For (c) the strict increasing property follows from the definition of $\psi$ and (ii) follows from (b).  (iii) is a standard fact in probability, which results from observing that
$\mathbb{P}\left(F \circ \psi \leq z\right)=\mathbb{P}\left(\psi \leq F^{-1}(z)\right)=F(F^{-1}(z))=z.$
\end{proof}

\begin{figure}
\includegraphics[width=14cm]{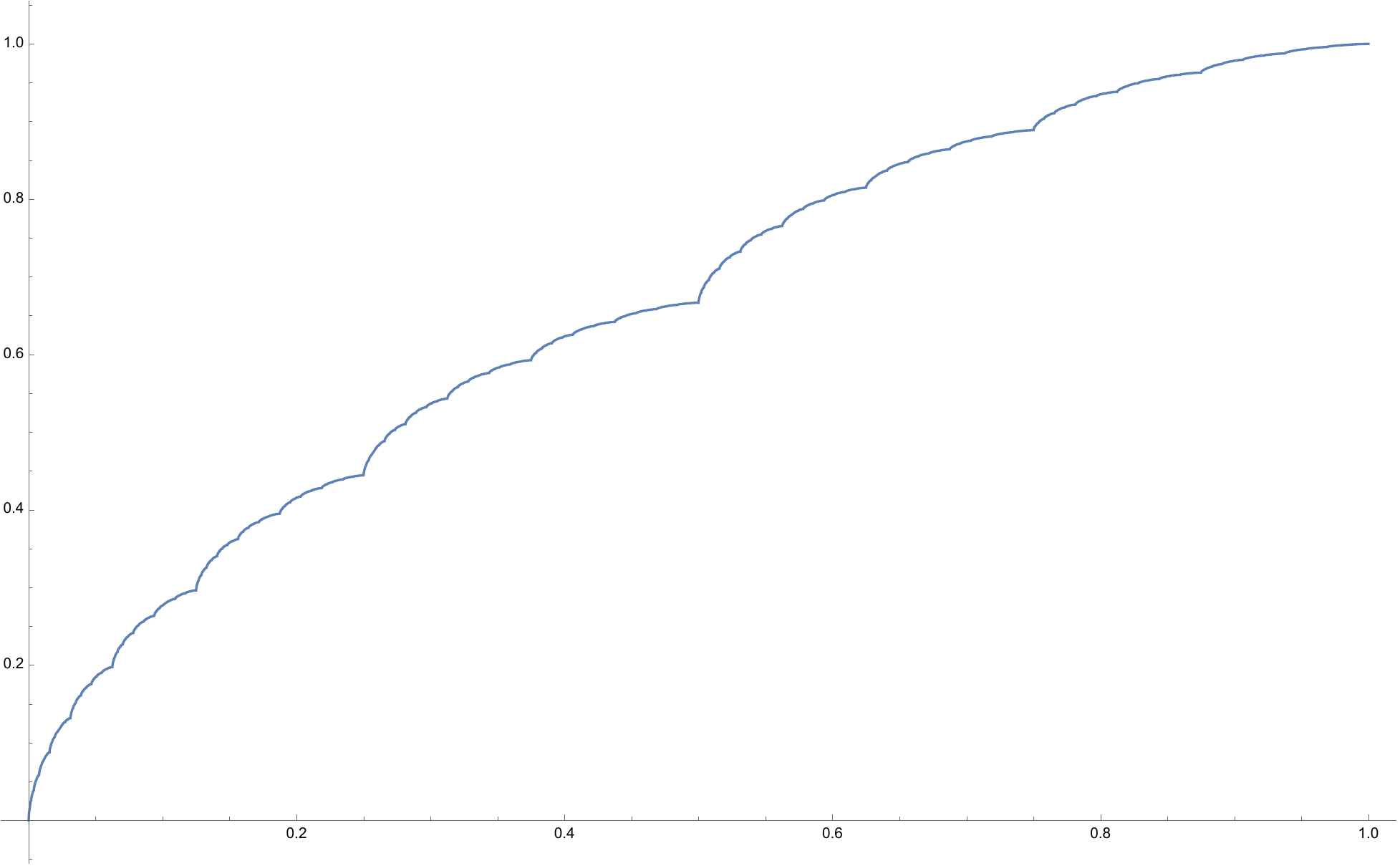}
\caption{Graph of the the distribution function of $\psi$ for the ternary Cantor set.}
\label{fig:F}
\end{figure}

We may now define our observable.

\begin{definition}\label{def:cantor-phi}
For $\alpha \in (0,2)$, let
\begin{equation}\label{eq:cantor-phi}
\varphi_{\alpha}(x):=(F \circ \psi(x))^{-\frac{1}{\alpha}}.
\end{equation}
where $F:[0,1] \to [0,1]$ is the distribution function of $\psi$.  Moreover, define
\begin{equation}\label{eq:cantor-X_n}
X_n:=\varphi_{\alpha} \circ G^n
\end{equation}
for all $n \in \mathbb{N}_0$.  
\end{definition}

Note that since $G$ is $\m$-invariant, $X_n$ is a stationary process.

\begin{lemma}\label{lem:cantor-alpha-reg}
\begin{enumerate}[(a)]
\item $\phi_\alpha(x) = \infty$ iff $x\in \Lambda$;
\item if $x\in \Lambda_{n} \setminus \{p_1,\dots,p_{k_I^n}\}$ and $y\in \Lambda_{n+1}$ then $\phi_\alpha(x)<\phi_\alpha(y)$;
\item $\{X_n>u\} = \{ \psi\circ G^n<F^{-1}(u^{-\alpha})\}$;
\item if $u'>u$ then $\m\{X_n>u'\}<\m\{X_n>u\}$.
\end{enumerate}
\end{lemma}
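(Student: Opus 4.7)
The plan is to derive each of the four items directly from the definition $\varphi_{\alpha}(x) = (F\circ\psi(x))^{-1/\alpha}$, together with the properties of $\psi$ and $F$ already established in Lemma~\ref{lem:proper-psi}. For (a), since $F$ is a strictly increasing distribution function with $F(0)=0$, the identity $\varphi_{\alpha}(x)=+\infty$ is equivalent to $\psi(x)=0$, which Lemma~\ref{lem:proper-psi}(a) identifies with $x\in\Lambda$.

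For (b), the key observation is that the map $t\mapsto F(t)^{-1/\alpha}$ is strictly decreasing on $(0,1]$, being the composition of the strictly increasing $F$ (by Lemma~\ref{lem:proper-psi}(c)(i)) with the strictly decreasing $t\mapsto t^{-1/\alpha}$. Hence $\varphi_\alpha(x)<\varphi_\alpha(y)$ is equivalent to $\psi(x)>\psi(y)$. The hypotheses translate through Lemma~\ref{lem:proper-psi}(b) into explicit information about the symbolic expansions: for $y\in\Lambda_{n+1}$ one reads off that $\mathcal{I}_y\subseteq\{n+2,n+3,\dots\}$ and hence $\psi(y)\le 2^{-(n+1)}$, while $x\in\Lambda_n\setminus\{p_1,\dots,p_{k_I^n}\}$ combined with the characterisation of the exceptional set forces $\psi(x)>\psi(y)$ strictly. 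Summing the relevant geometric series $\sum 2^{-i}$ on each side then delivers the inequality.

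Item (c) is a direct rearrangement: applying the strictly decreasing map $z\mapsto z^{-1/\alpha}$ to $X_n(x)=\varphi_\alpha(G^n x)>u$ yields $F(\psi(G^n x))<u^{-\alpha}$, and inverting the strictly increasing continuous bijection $F$ (existence and continuity coming from Lemma~\ref{lem:proper-psi}(c)(i) and (iii)) produces $\psi(G^n x)<F^{-1}(u^{-\alpha})$. For (d), I would combine (c) with the $G$-invariance of Lebesgue measure and the definition of $F$ as the distribution function of $\psi$ to compute
\[
\m\{X_n>u\}=\m\{\psi\circ G^n<F^{-1}(u^{-\alpha})\}=\m\{\psi<F^{-1}(u^{-\alpha})\}=F\bigl(F^{-1}(u^{-\alpha})\bigr),
\]
where continuity of $F$ from Lemma~\ref{lem:proper-psi}(c)(iii) converts the strict inequality on $\psi$ into a non-strict one at no cost; strict monotonicity in $u$ then follows on the range where $u^{-\alpha}\le 1$.

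The main obstacle I anticipate is the strictness claim in (b): one has to carefully track the exceptional set $\{p_1,\dots,p_{k_I^n}\}$ at each scale to ensure the $\psi$-inequality is strict rather than merely non-strict. The remaining three items are essentially rewriting the definition.
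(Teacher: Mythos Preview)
Your approach for (a), (b), and (c) is the same as the paper's: all three are ``immediate from the definition'' once one unwinds $\varphi_\alpha = (F\circ\psi)^{-1/\alpha}$ through Lemma~\ref{lem:proper-psi}, exactly as you describe.

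For (d) you take a genuinely different route from the paper. The paper derives (d) from (b) together with Lemma~\ref{lem:proper-psi}(c), whereas you bypass (b) entirely and compute $\m\{X_n>u\}$ explicitly: by (c), $G$-invariance of $\m$, and continuity of $F$ (which you correctly extract from Lemma~\ref{lem:proper-psi}(c)(iii)), one gets $\m\{X_n>u\}=F(F^{-1}(u^{-\alpha}))=u^{-\alpha}$ for $u\ge 1$, and strict monotonicity is then automatic. This is cleaner and more informative than the paper's route, since it actually identifies the tail. It also has the advantage of not relying on (b), whose statement as written is slightly problematic: nothing prevents $x\in\Lambda_{n+1}\subset\Lambda_n\setminus\{p_1,\dots,p_{k_I^n}\}$, in which case one could take $x=y$ and the strict inequality fails. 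Your instinct that the strictness in (b) is the delicate point is exactly right; the intended reading is presumably $x\in(\Lambda_n\setminus\Lambda_{n+1})\setminus\{p_1,\dots,p_{k_I^n}\}$, under which your argument $\psi(y)\le 2^{-(n+1)}<\psi(x)<2^{-n}$ does go through.
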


\begin{proof}
Parts (a), (b) and (c) are immediate from the definition.  Part (d) then follows from part (b) along with  Lemma~\ref{lem:proper-psi}(c).
\end{proof}

\section{General theory of functional limit theorems}
\label{sec:FLTs}

In this section we present a simplified version of the theory in \cite{FFT25} which, to fit with our application outlined above, deals with real non-negative random variables $X_0, X_1, \ldots$ distributed according to $\p$, defined via a dynamical system $f:[0, 1]\to [0, 1]$ with invariant probability measure $\mu$ and an observable $\phi:[0, 1]\to [0, \infty]$ with $X_n= \phi\circ f^{n-1}$ (hence $\p(X>u)= \mu(\phi>u)$, but we will also write this as $\mu(X>u)$).  

We define sequences $(k_n)_{n\in\N}$, $(r_n)_{n\in\N}$, $(t_n)_{n\in\N}$, $(q_n)_n$, which we assume to be such that 
\begin{equation}
\label{eq:kn-sequence}
k_n,r_n,t_n\xrightarrow[n\to\infty]{}\infty\quad \text{ where }  k_n\cdot t_n = o(n), k_n\cdot r_n\sim n \text{ and } q_n=o(r_n).
\end{equation}
As in \cite[(3.2), (3.3)]{FFT25}, these relate to a blocking argument with $k_n$ blocks of size $\lfloor n/k_n\rfloor$, where $t_n$ is a time gap inserted to achieve asymptotic independence of the blocks, while $q_n$ is the clustering run length, \ie all abnormal observations occurring within a time difference of at most $q_n$ units between each other belong to the same cluster. In most dynamical applications $q_n=q$ for all $n\in\N$, such as when the observable is maximised at a single periodic point of period $p$, in which case $q_n=p$ (see \cite{FFT12}). In fact, we will see that, in this paper, for our applications we only require $q_n=1$.
 We assume for each $\tau\ge 0$ there is $(u_n(\tau))_n$ such that
\begin{equation}
\label{eq:un}
\lim_{n\to \infty} n\p(X_0> u_n(\tau))= \tau, \lim_{\tau_1\to 0, \tau_2\to \infty}\p\left(u_n(\tau_1)<X_0< u_n(\tau_2)\right)=1
\end{equation}
and that $u_n:[0, \infty]\to [0, \infty]$ is bijective. 

We say that $X_0$ has \emph{$\alpha$-regularly varying tails} if there is a sequence $(\mathfrak{a}_n)_n$ such that for any $y>0$,
\begin{equation*}
\lim_{n\to\infty}n\p(X_0>y\mathfrak{a}_n)=y^{-\alpha}.
\end{equation*}

Define
$$U_n(\tau):=\{X_0>u_n(\tau)\}.$$

\subsection{Sequence spaces and the anchored tail process}
\label{ssec:seqtail}

For $i< j\in\{0,\ldots,n\}$ we will use the notation 
\begin{equation}
\label{eq:normalised-block}
\X_n^{i, j}:=\left(u_n^{-1}(X_i), \ldots,  u_n^{-1}(X_{j-1})\right),\quad \X_{n,i}:=\X_n^{(i-1)r_n, ir_n}.
\end{equation}

Define
\begin{align*}
l_\infty^+&:=\left\{\mathbf{x}=(x_j)_j\in (0, \infty]^{\Z}\colon\;\lim_{|j|\to\infty}x_j=\infty\right\}.
\end{align*}

Note that we can embed $\cup_{n\in\N} (0, \infty]^n$ 
 into $l_\infty^+$  
 by adding a sequence of $\infty$ before and after the $n$ entrances of any element of  $(0, \infty]^n$ 
  For example, 
$\mathbb X_{n}^{i,j}$ can be seen as an element of $l_\infty^+$ by identifying it with 
\begin{equation*}
\left(\ldots,\infty,\infty,u_n^{-1}(X_{i}),\ldots,{u_n^{-1}(X_{j-1})},\infty,\infty,\ldots\right).
\end{equation*}

We can define the left shift $\TT$ on this space, which moves every element one place to the left.
We define the quotient space $\tilde l_\infty^+=l_\infty^+/{\sim}$ where $\sim$ is the equivalence relation defined by $\mathbf x\sim\mathbf y$ if and only if there exists $k\in\Z$ such that $\TT^k(\mathbf x)=\mathbf y$. Also let $\tilde \pi$ denote the natural projection from $ l_\infty^+$ to $\tilde l_\infty^+$, which assigns to each element $\mathbf x$ of $ l_\infty^+$ the corresponding  equivalence class $\tilde\pi(\mathbf x)=\tilde{\mathbf x}$ in $ \tilde l_\infty^+$. Given any vector $v$ of $(0, \infty]^m$, for some $m\in\N$, we write $\tilde\pi( v)$ for the projection of the natural embedding of $v$ into $ l_\infty^+$ to the quotient space $\tilde l_\infty^+$. Namely,
\begin{equation*}
\tilde\pi(\mathbb X_{n}^{i,j})=\tilde\pi\left(\left(\ldots,\infty,\infty,u_n^{-1}(X_{i}),\ldots,{u_n^{-1}(X_{j-1})},\infty,\infty,\ldots\right)\right).
\end{equation*}
We let $\tilde\infty\in \tilde l_\infty^+$ be the sequence where all terms are $\infty$.

We will assume in the theory, and prove in practice,  the existence of a process $(Y_j)_{j\in\Z}\in l_\infty^+$ satisfying the following assumptions: 
\begin{enumerate}

\item \label{Y-def} 
$\mathcal L\left(\frac1\tau \mathbb X_n^{r_n+s, r_n+t}\;\middle\vert\; X_{r_n}>u_n(\tau) \right)\xrightarrow[n\to\infty]{}\mathcal L\left((Y_j)_{j=s,\dots,t}\right),$ for all $s<t\in\Z$ and all $\tau>0$;

\item \label{spectral-process} the process $(\Theta_j)_{j\in\Z}$ given by $\Theta_j=\frac{Y_j}{Y_0}$ is independent of $Y_0$; 

\item \label{Y-infinity} $\lim_{|j|\to\infty} Y_j=\infty$ a.s.;

\item \label{positive-EI} $\p\left(\inf_{j\leq -1}Y_j\geq 1\right)>0$.

\end{enumerate}

Here $(r_n)_n$ is assumed to satisfy \eqref{eq:kn-sequence}: in our applications it is the sequence appearing in $\D_{q_n}^*$ and $\D^{'*}_{q_n}$ below.  We call this the \emph{transformed tail process}.

We can then define:

\begin{definition}
\label{def:piling-process}
Assuming the existence of a sequence $(Y_j)_{j\in\Z}$ satisfying conditions \eqref{Y-def}--\eqref{positive-EI}, we define the \emph{transformed anchored tail process} $(Z_j)_{j\in\Z}$ as a sequence of random numbers satisfying 
$$
\mathcal L\left((Z_j)_{j\in\Z} \right)=\mathcal L\left((Y_j)_{j\in\Z}\;\middle\vert\; \inf_{j\leq -1}Y_j\geq 1\right).
$$
\end{definition}
The anchor $Z_0$ was chosen so that it marks the beginning of a new cluster.
We consider a polar decomposition of the transformed anchored tail process by defining the random variable $L_Z$ and the process $(Q_j)_{j\in\Z}$ by
\begin{equation}
\label{eq:polar}
L_Z:= \inf_{j
\in\Z}Z_j \qquad Q_j:=\frac{Z_j}{L_Z}.
\end{equation}

\subsection{The  $\D_{q_n}^{*}$ and  $\D_{q_n}^{'*}$ conditions and the extremal index}
\label{ssec:DsEI}

For $B\subset [0 ,1]$, $J$ a subset of $[0, \infty)$ and $q\in\N$ define
$$\W_J(B):= \bigcap_{i\in J\cap\N_0} f^{-i} (B^c), \ \W_J^c(B):= (\W_J(B))^c= \bigcup_{i\in J\cap\N_0} f^{-i} (B),$$
 and
$$B^{(q)}:= B\cap \bigcap_{i=1}^{q}f^{-i}(B^c),\qquad B^{(0)}:=B.$$

Now let $E$ be an element of the ring generated by the half open intervals of $[0,1]$ so that $E=\cup_{\ell=1}^{N}J_\ell=\cup_{\ell=1}^{N}[a_\ell,b_\ell)$ and for each $\ell=1,\ldots,N$, let $A_\ell$ be an element of the ring generated by the half open intervals of $[0,\infty)$ so that $A_\ell=\cup_{s=1}^{n_\ell}[{\tau_s''},\tau'_s)$ for some $n_\ell\in \N$ and some $\tau_1''<\tau_1'<\tau_2''<\cdots <\tau_{n_\ell}''<\tau_{n_\ell}'$.

Given $E$ and $A_\ell$ as above, define
\begin{equation}
\label{def:Anl}
J_{n,\ell}:=k_nr_n J_\ell:=[k_nr_na_\ell,k_nr_nb_\ell)
,\quad A_{n,\ell}:=\bigcup_{s=1}^{n_\ell} \left\{u_n^{-1}(X_0)\in[\tau''_s,\tau'_s)\right\}=\bigcup_{s=1}^{n_\ell}U_n(\tau'_s)\setminus U_n(\tau_s'')
\end{equation}

\textbf{Condition $\D_{q_n}^{*}$}. We say that $\D_{q_n}^{*}$ holds for the sequence ${X}_0,{X}_1,\dots$ if there exist sequences $(k_n)_{n \in \mathbb{N}}, (r_n)_{n \in \mathbb{N}}, (t_n)_{n \in \mathbb{N}}$ and $(q_n)_{n \in \mathbb{N}}$ as defined in \eqref{eq:kn-sequence}, such that for every $N,t,n \in \mathbb{N}$ and every $J_\ell, A_\ell$ as above, for $\ell=1,\dots,N$, we have
\begin{equation*}
	\left\lvert \mathbb{P}\left(A_{n,\ell}^{(q_n)} \cap \bigcap_{i=\ell}^{N} \mathscr{W}_{J_{n,i}}\left(A_{n, i}^{(q_n)} \right)\right) - \mathbb{P}\left(A_{n, \ell}^{(q_n)} \right)\mathbb{P}\left(\bigcap_{i=\ell}^{N} \mathscr{W}_{J_{n,i}}\left(A_{n, i}^{(q_n)} \right)\right) \right\rvert \leq \gamma(n,t_n)
\end{equation*}
where $\min\{J_{n,\ell} \cap \mathbb{N}_0\} \geq t_n+q_n$ and $\lim_{n \to \infty} n\gamma(n,t_n)=0$.

\textbf{Condition $\D_{q_n}^{'*}$}. We say that $\D_{q_n}^{'*}$ holds for the sequence ${X}_0, {X}_1,\dots$ if there exist sequences $(k_n)_{n \in \mathbb{N}}, (r_n)_{n \in \mathbb{N}}, (t_n)_{n \in \mathbb{N}}$ and $(q_n)_{n \in \mathbb{N}}$ as defined in \eqref{eq:kn-sequence}, such that for every $A_1=\cup_{s=1}^{N_1}[{\tau''}_s,\tau'_s)$, 
\begin{equation*}
	\lim_{n \to \infty} n\mathbb{P}\left(A_{n,1}^{(q_n)} \cap \mathscr{W}^c_{[q_n+1,r_n)}\left(A_{n,1} \right)\right)=0.
\end{equation*}
\begin{remark}
\label{rem:D-prime-two}
Note that if $(q_n)_{n\in\N}$ and $(\tilde q_n)_{n\in\N}$ are two sequences satisfying \eqref{eq:kn-sequence} and $q_n\leq\tilde q_n$, for all $n\in \N$.  Then $\D'_{q_n}$ implies $\D'_{\tilde q_n}$. As observed in \cite[Remark~2.11]{AFF20}, one should try first to find the smallest constant sequence $q_n=q$ that makes $\D_{q_n}^{'*}$ hold,
establishing, in this way, the run length. We will see that here we can actually take $q_n=1$ for all $n\in\N$.
\end{remark}

If, for all $\tau>0$, the following limit exists:
\begin{equation}
\label{eq:EI}
\theta=\lim_{n\to\infty}\frac{\p(U_n^{(q_n)}(\tau))}{\p(U_n(\tau))},
\end{equation}
then we call $\theta$ the \emph{extremal index}.

\subsection{Complete convergence of the Rare Events Point Processes}\label{subsec:cantor-repp}
As mentioned above, at the core of all convergence results stated and mentioned in this paper is the convergence of the so-called Rare Events Point Processes (REPP), which are spatio-temporal clustering processes that keep record of the cluster profiles and their time occurrences:
\begin{equation}
\label{def:Nn}
N_n=\sum_{i=1}^{\infty} \delta_{\left(i/k_n,\tilde{\pi}(\mathbb{X}_{n,i})\right)}.
\end{equation}

The complete convergence of these REPP follows from \cite[Theorem 3.18]{FFT25}, which assuming that the transformed anchored tail process exists and is well defined (see Definition~\ref{def:piling-process}) and that conditions $\D_{q_n}^{*}$ and  $\D_{q_n}^{*'}$ are satisfied, states that $N_n$ given in \eqref{def:Nn} converges weakly in the space of boundedly finite point measures on $\mathbb{R}_0^{+} \times \tilde{l}_{\infty}^+ \setminus \{\tilde{\infty}\}$ with weak$^{\#}$ topology (see \cite[Appendix~C]{FFT25} for details) to
the Poisson process with decorations, which can be written as:
\begin{equation}
\label{def:N}
N=\sum_{i=1}^{\infty} \delta_{(T_i,U_i\mathbf{\tilde{Q}}_i)},
\end{equation}
where $T_i$ and $U_i$ are independently (of $i$ and of each other) distributed according to $Leb$ and $\theta Leb$ respectively,  for $\theta$ as in \eqref{eq:EI}, and $(\mathbf{\tilde{Q}}_i)_{i\in\N}$ is an i.i.d sequence such that each $\mathbf{\tilde{Q}}_{i}\in\tilde l_\infty^+ \setminus \{\tilde{\infty}\}$ has the same distribution as $\tilde\pi((Q_j)_{j\in\Z})$, where the bi-infinite sequence $(Q_j)_j$ is as in \eqref{eq:polar}.

\subsection{Enriched functional limit theorem}\label{cantor-flt}
For processes $X_0, X_1, \ldots$ with $\alpha$-regularly varying tails, for which we can prove the convergence of the REPP defined above, applying \cite[Theorem 2.5]{FFT25}, we obtain an enriched functional limit theorem (FLT) for the normalised sums: 
\begin{equation*}
		S_n(t)=\sum_{i=0}^{\floor{nt}-1} \dfrac{1}{\mathfrak{a}_n}X_i-tc_n, \ t \in [0,1],
	\end{equation*}
where the sequence $(c_n)_{n\in\N}$ is such that $c_n=0$ if $0<\alpha<1$ and 
\begin{equation}
\label{eq:centering}
c_n=\frac{n}{\mathfrak{a}_n} \E\left(X_0\Ind_{|X_0|\leq \mathfrak{a}_n}\right), \qquad \text{for } 1<\alpha<2.
\end{equation}
In order to guarantee the existence of a limit for the c\`ad\`ag function $S_n(t)$, which sometimes is not possible with any of the Skorohod metrics in $D([0,1])$, the space of  c\`ad\`ag functions on $[0,1]$, and to keep the information regarding the excursions performed by finite time process during a cluster of abnormal observations that gets collapsed on the same discontinuity of the limit process, in \cite{FFT25}, the authors introduced the enriched space $F'([0,1])$ as the space of triplets $(v, S^v,\{e_v^s\}_{s\in S^v})$, where $v$ is the main  c\`ad\`ag function, $S^v$ a set containing all its discontinuities and $e_v^s$ a decorating excursion associated to each element of $S^v$. 
This excursion is an element of  $\tilde D([0,1])$, the equivalence space of  c\`ad\`ag functions on $[0,1]$, where two elements are identified if there is a strictly continuous time reparametrisation that sends one into another. We refer to \cite[Section~2.3.1]{FFT25} for the details about this space, the corresponding metric and notions of convergence there. 

From  \cite[Theorem 2.5]{FFT25}, in the case $0<\alpha<1$, we obtain that the c\`ad\`ag function $S_n(t)$ embedded in the space $F'$ converges to  $(V,\text{disc}(V),(e_V^s)_{s \in \text{disc}(V)})$, where $V$ is an $\alpha$-stable Lévy process on $[0,1]$, which can be written as:
	\begin{equation}
	\label{eq:V-01}
		V(t)=\sum_{T_i \leq t}\sum_{j \in \mathbb{Z}} U_i^{-\frac{1}{\alpha}}\mathcal{Q}_{i,j}
	\end{equation}
	and the excursions are given by
	\begin{equation}
	\label{eq:excursion}
		e_V^{T_i}(t)=V(T_i^{-})+U_i^{-\frac{1}{\alpha}}\displaystyle\sum_{j \leq \floor{\tan(\pi(t-\frac{1}{2}))}} \mathcal{Q}_{i,j}, \ t \in [0,1]
	\end{equation}
	where $\mathcal{Q}_{i,j}=\mathbf{\tilde{Q}}_{i,j}^{-\frac1\alpha}$ (zero if $\mathbf{\tilde{Q}}_{i,j}=\infty$), with $\mathbf{\tilde{Q}}_i=\left(\mathbf{\tilde{Q}}_{i,j}\right)_{j}$, $T_i$ and $U_i$ as in $N$ in \eqref{def:N}.

In the case $1<\alpha<2$, we need the next two extra conditions to hold. Namely, for all $\delta>0$
\begin{equation}
\label{eq:small-jumps}
\lim_{\varepsilon\to0}\limsup_{n\to\infty}\p\left(\max_{1\leq k\leq n}\left\|\sum_{j=1}^k\left(X_j\Ind_{|X_j|\leq \varepsilon a_n}\right)-\E\left(X_j\Ind_{|X_j|\leq \varepsilon a_n}\right)  \right\|\geq \delta a_n\right)=0
\end{equation}
and, moreover,
\begin{equation}
\label{eq:Q-condition->1}
\E\left(\left(\sum_{j\in\Z}\|\mathcal{ Q}_j)\|\right)^\alpha\right)<\infty.
\end{equation}
Then, under these two assumptions as well, $S_n(t)$  converges to  $(V,\text{disc}(V),(e_V^s)_{s \in \text{disc}(V)})$ in $F'$, where $V$ has the more complicated expression:
\begin{align}
V(t)&=\lim_{\varepsilon\to 0}\Bigg(\sum_{T_i\leq t}\sum_{j\in\Z} U_{i}^{-\frac1\alpha}\mathcal{Q}_{i,j}\Ind_{\{|U_{i}^{-\frac1\alpha}\mathcal{Q}_{i,j}|>\varepsilon\}}\nonumber\\
&\hspace{4.5cm}
-t\theta\int_{0}^{+\infty}\E\Bigg(y\sum_{j\in\Z}\mathcal{Q}_{j}\Ind_{\{\eps< y|\mathcal{Q}_{j}|\leq 1\}}\Bigg)d(- y^{-\alpha}) \Bigg),
\label{eq:V-12}
\end{align}
while $e_V^{T_i}(t)$ is again as in \eqref{eq:excursion}.

\section{Application of the theory to our examples}
\label{sec:application}

We apply the theory described above to the setting described in Section~\ref{sec:cantor-set}. In particular, we have now all the tools and notation introduced in order to state our main results.

\subsection{Statement of main results}
\label{subsec:statement}

We begin with the convergence of the REPP.
\begin{theorem}
\label{thm:REPP-conv}
Let $G$ be as defined in Section \ref{sec:cantor-set}, let $\varphi_{\alpha}$ be as in Definition \ref{def:cantor-phi} and $X_0,X_1,\ldots$, as in \eqref{eq:cantor-X_n}. We consider the REPP
$$
N_n=\sum_{i=1}^{\infty} \delta_{\left(\frac{i}{k_n},\tilde{\pi}\left(\left(n(X_{(i-1)r_n})^{-\alpha},n(X_{(i-1)r_n+1})^{-\alpha},\ldots, n(X_{ir_n-1})^{-\alpha}\right)\right)\right)},
$$
where the sequences $(k_n)_n, (r_n)_n$ are as described in \eqref{eq:kn-sequence}.

Then $N_n$ converges in the  weak$^{\#}$  topology to a process $N$ which can be written as in \eqref{def:N}, where $Q_j=(1-\theta)^{-j}$ for all $j\geq 0$, $Q_j=\infty$ for all $j\leq -1$ and
$$
\theta=1-\displaystyle\sum_{i=1}^{\I_I}\dfrac{1}{m_i}= 1-\lambda.
$$
\end{theorem}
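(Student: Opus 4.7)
The plan is to apply \cite[Theorem 3.18]{FFT25}: I must (i) exhibit the transformed tail process $(Y_j)_{j\in\Z}$ and verify conditions \ref{Y-def}--\ref{positive-EI} of Section~\ref{ssec:seqtail}, and (ii) check the dependence conditions $\D_{q_n}^*$ and $\D_{q_n}^{'*}$. The claimed form of $Q_j$ and the value $\theta=1-\lambda$ will then drop out of the polar decomposition of the anchored tail process.

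The analytical workhorse will be the self-similarity identity
\begin{equation*}
F(2x) = F(x)/\lambda \quad\text{for } 0<x\le 1/2,
\end{equation*}
which I plan to derive by combining Lemma~\ref{lem:proper-psi}(c)(ii), the factorisation $\psi(y) = 2^{-n}\psi(G^n(y))$ valid for $y\in\Lambda_n$, and the Lebesgue-invariance of $G$. Paired with the symbolic recursion $\psi(G(y)) = 2\psi(y) - \Ind\{y_1\in \I_J\}$, this identity controls $\psi$ under forward iteration near $\Lambda$. Taking $u_n(\tau) = (n/\tau)^{1/\alpha}$ and setting $V := nF(\psi(G^{r_n}(x)))/\tau$, Lemma~\ref{lem:proper-psi}(c)(iii) makes $V$ uniform on $(0,1)$ conditional on $\{X_{r_n}>u_n(\tau)\}$. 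Since this conditioning forces $\psi(G^{r_n}(x))$ to be arbitrarily small, Lemma~\ref{lem:proper-psi}(b) forces the initial symbols of $G^{r_n}(x)$ to lie in $\I_I$, so the recursion iterates to $\psi(G^{r_n+j}(x)) = 2^j\psi(G^{r_n}(x))$ and the scaling yields $Y_j = V/\lambda^j$ in the limit for each fixed $j\ge 0$.

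For $j\le -1$ I will traverse $G$-preimages: given $y=G^{r_n}(x)$, the symbols $\xi_\ell := x_{r_n-\ell+1}$ selecting successive preimage branches are i.i.d.~with $\p(\xi_\ell=k)=|(I,J)_k|$, by Lebesgue-invariance of $G$ and the linear full-branched structure, so $\p(\xi_\ell\in\I_I)=\lambda$. If $\xi_1,\dots,\xi_j\in\I_I$ the same scaling yields $Y_{-j}=\lambda^j V$; otherwise at least one $\xi_\ell$ lies in $\I_J$, forcing $\psi(G^{r_n-j}(x)) \ge 2^{-j}$ and thus $Y_{-j}\ge n\lambda^j/\tau \to\infty$. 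Conditions \ref{spectral-process}--\ref{positive-EI} are then immediate, with \ref{positive-EI} holding because $\p(\xi_1\in\I_J)=1-\lambda>0$. Since $\lambda^j V < 1$ for all $j\ge 1$ and $V\in(0,1)$, the anchoring event $\{\inf_{j\le -1}Y_j\ge 1\}$ collapses to $\{\xi_1\in\I_J\}$, so $Z_j=\infty$ for $j\le -1$ and $Z_j=V/\lambda^j$ for $j\ge 0$; then $L_Z=Z_0=V$, giving $Q_j=\lambda^{-j}$ for $j\ge 0$ and $Q_j=\infty$ otherwise. A parallel scaling computation yields $\p(X_0>u_n(\tau),\, X_1\le u_n(\tau)) \sim (1-\lambda)\tau/n$, while the remaining constraints $X_j\le u_n(\tau)$ for $j=2,\ldots,q_n$ are automatic inside the scaling regime, so $\theta = 1-\lambda$ via \eqref{eq:EI} and $Q_j = (1-\theta)^{-j}$ exactly as claimed.

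The two dependence conditions should be comparatively routine in the Markov, full-branched, Lebesgue-preserving setup: $\D_{q_n}^*$ I will deduce from exponential decay of correlations against BV observables applied to cylinder approximations of $A_{n,\ell}^{(q_n)}$, and $\D_{q_n}^{'*}$ from an $O(r_n\tau/n)$ bound on second returns to $U_n(\tau)$ within a block, valid whenever $q_n = o(r_n) = o(n)$. The main technical obstacle I anticipate is the joint convergence demanded by \ref{Y-def} across arbitrary windows $[s,t]$ (rather than just marginals): producing the bi-infinite law $(Y_j)$ requires coupling the i.i.d.~preimage-symbol process with the finite-$n$ conditional law uniformly over fixed windows, which the Markov structure makes tractable but still demands care in tracking both forward and backward iterates simultaneously.
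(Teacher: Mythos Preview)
Your proposal is correct and follows essentially the same route as the paper: verify the hypotheses of \cite[Theorem~3.18]{FFT25} by computing the transformed (anchored) tail process from the symbolic dynamics and checking $\D_{q_n}^*$, $\D_{q_n}^{'*}$ via cylinder approximation plus exponential mixing, with $q_n=1$. Your organising identity $F(2x)=F(x)/\lambda$ and the explicit i.i.d.\ preimage-symbol argument are a tidy repackaging of the paper's direct measure computations in Propositions~\ref{prop:cantor-EI} and~\ref{prop:cantor-piling} (indeed the paper's ratio $\m(\Lambda_{j-j'} \cup H^{(j-j')})/\m(\Lambda_{j} \cup H^{(j)})=\lambda^{-j'}$ is exactly your scaling identity in disguise), but the underlying mechanism is identical; the one place where the paper is more explicit than your sketch is the short-range part of $\D_{q_n}^{'*}$, where it uses that orbits leaving $U_n^{(1)}(\tau)$ deterministically descend through successively lower-ranked holes of the Cantor construction for $\sim j_{n,\tau}$ steps before any return is possible (see Lemma~\ref{lem:cantor-cyl-approx} and the paragraph following it), a structural fact your ``$O(r_n\tau/n)$'' mixing heuristic alone does not supply.
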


From the convergence of the REPP and \cite[Theorem 2.5]{FFT25}, we obtain the following enriched functional limit theorem.
\begin{theorem}
\label{thm:enriched-conv}
Under the same assumptions of the previous theorem, we consider the heavy tailed sums:
$$
S_n(t)=\sum_{i=0}^{\floor{nt}-1} \dfrac{1}{n^{\frac{1}{\alpha}}}X_i-tc_n, \ t \in [0,1],
$$
where $c_n=0$, when $0<\alpha<1$ and is given by \eqref{eq:centering}, when $1<\alpha<2$.

Then, the process $S_n(t)$ converges in $F'$ to $(V,\text{disc}(V),(e_V^s)_{s \in \text{disc}(V)})$, where $V$ is as described in \eqref{eq:V-01}, when $0<\alpha<1$, or as in \eqref{eq:V-12}, when $1<\alpha<2$, and the excursions can be written as
\begin{equation*}
	e_V^{T_i}(t)=V(T_i^{-})+U_i^{-\frac{1}{\alpha}}\sum_{0 \leq j \leq \floor{\tan(\pi t/2)}} (1-\theta)^{\frac{j}{\alpha}}, \ t \in [0,1].
	\end{equation*}
\end{theorem}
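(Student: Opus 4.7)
The plan is to invoke \cite[Theorem 2.5]{FFT25} as a black box. That result gives exactly the conclusion we want, provided three ingredients are in place: (i) the REPP $N_n$ converges in the weak$^\#$ topology to the decorated Poisson process $N$ in \eqref{def:N}; (ii) $X_0$ has $\alpha$-regularly varying tails, with an appropriate normalising sequence $(\mathfrak{a}_n)_n$; and, when $1<\alpha<2$, (iii) the small jumps condition \eqref{eq:small-jumps} and the moment condition \eqref{eq:Q-condition->1} hold. Ingredient (i) is precisely Theorem~\ref{thm:REPP-conv}, which also identifies the decoration $Q_j$ and the extremal index $\theta=1-\lambda$.

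For (ii), Lemma~\ref{lem:proper-psi}(c)(iii) tells us that $F\circ\psi$ is uniformly distributed on $[0,1]$. Hence
\begin{equation*}
\p(X_0>u)=\p\bigl((F\circ\psi)^{-1/\alpha}>u\bigr)=\p\bigl(F\circ\psi<u^{-\alpha}\bigr)=u^{-\alpha}
\end{equation*}
for $u\geq 1$, so $X_0$ is $\alpha$-regularly varying with $\mathfrak{a}_n=n^{1/\alpha}$, which is the normalisation used in the definition of $S_n(t)$. This also makes $u_n(\tau)=(\tau/n)^{-1/\alpha}$, consistent with the form of $N_n$ in Theorem~\ref{thm:REPP-conv}.

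Next, I would read off the excursions. With the $\mathbf{\tilde Q}_{i,j}$ of Theorem~\ref{thm:REPP-conv}, the polar decomposition \eqref{eq:polar} gives $\mathcal{Q}_{i,j}=\mathbf{\tilde Q}_{i,j}^{-1/\alpha}=(1-\theta)^{j/\alpha}$ for $j\geq 0$ and $\mathcal{Q}_{i,j}=0$ for $j\leq -1$. Substituting into \eqref{eq:excursion} and using that the excursions live in the quotient space $\tilde D([0,1])$ modulo continuous time reparametrisations (so the homeomorphism $t\mapsto \tan(\pi(t-\tfrac12))$ may be replaced by $t\mapsto\tan(\pi t/2)$ after rescaling the time axis so that the excursion starts when the first non-zero $\mathcal{Q}_{i,j}$ appears), we get exactly
\begin{equation*}
e_V^{T_i}(t)=V(T_i^-)+U_i^{-1/\alpha}\sum_{0\leq j\leq \floor{\tan(\pi t/2)}}(1-\theta)^{j/\alpha},
\end{equation*}
and the main part $V$ takes the form \eqref{eq:V-01} or \eqref{eq:V-12} according to $\alpha$. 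This handles the $0<\alpha<1$ case completely.

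For $1<\alpha<2$, condition \eqref{eq:Q-condition->1} is trivial: since $0<1-\theta=\lambda<1$,
\begin{equation*}
\sum_{j\in\Z}|\mathcal{Q}_j|=\sum_{j\geq 0}(1-\theta)^{j/\alpha}=\frac{1}{1-(1-\theta)^{1/\alpha}}<\infty,
\end{equation*}
and raising to the power $\alpha$ gives a finite deterministic quantity. The genuine obstacle is the small jumps condition \eqref{eq:small-jumps}: one must control the maximum of the partial sums of the centred truncations $X_j\Ind_{|X_j|\leq \varepsilon a_n}-\E(X_j\Ind_{|X_j|\leq \varepsilon a_n})$. My plan here is to follow the approach of \cite{CheNicTor24} as indicated in Section~\ref{sec:application}: exploit the uniformly expanding, full-branched linear Markov structure of $G$ (which gives exponential decay of correlations against BV observables) together with Doob-type maximal inequalities applied to the truncated sum, and Karamata's theorem to estimate the truncated second moment $\E(X_0^2\Ind_{|X_0|\leq \varepsilon a_n})\sim C(\varepsilon a_n)^{2-\alpha}$. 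Combining these with a standard chaining argument in $n$ bounds the maximum by $\delta a_n$ up to an $\varepsilon$-dependent constant that vanishes as $\varepsilon\to 0$, which is the main technical step; everything else is bookkeeping inside the FFT25 machinery.
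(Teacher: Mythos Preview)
Your proposal is correct and follows essentially the same route as the paper: both derive Theorem~\ref{thm:enriched-conv} from Theorem~\ref{thm:REPP-conv} together with \cite[Theorem~2.5]{FFT25}, verify the $\alpha$-regular variation via the uniform distribution of $F\circ\psi$, read off the excursion from the explicit form of $(Q_j)_j$, and for $1<\alpha<2$ check \eqref{eq:Q-condition->1} by the geometric sum and handle \eqref{eq:small-jumps} by appealing to \cite{CheNicTor24}. The only minor difference is that the paper is slightly more explicit about the small jumps reduction (using the Davis covariance criterion and the polynomial BV complexity of $\{\phi_\alpha<t\}$ rather than the Doob/Karamata phrasing you sketch), but the substance is the same.
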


\begin{remark}
In light of \cite{FFRS20}, for $\mathcal{M}$ equal to the middle-$\frac{1}{3}$ Cantor set then $\theta<1$ if and only if, among the class of maps $mx \mod 1$, $m \geq 2$, we restrict to $m=3^k$, $k \in \mathbb{N}$. Hence, an extremal index strictly less than $1$ is obtained if and only if the dynamics is somehow compatible with the construction of the middle-$\frac{1}{3}$ Cantor set. We proceed with the computation of the extremal index in the greater generality of $\Lambda$ dynamically defined by $G$ and for our observable $\varphi_{\alpha}$ which is different from the one used in \cite{FFRS20}.
\end{remark}

The rest of this section is dedicated to the proof of Theorem~\ref{thm:REPP-conv}, which essentially means checking that we have the right normalisations, that all conditions of \cite[Theorem 3.18]{FFT25}, \cite[Theorem 2.5]{FFT25} described in Sections \ref{subsec:cantor-repp} and \ref{cantor-flt} hold, and computing the extremal index and the transformed anchored tail process.

\subsection{Tuning constants and normalising sequences}
\begin{lemma}
 $X_n$ have $\alpha$-regularly varying tails.
 \end{lemma}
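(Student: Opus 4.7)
The plan is to show that the tail of $X_0$ is exactly Pareto with index $\alpha$ for $u\ge 1$, so that regular variation holds on the nose with normalising sequence $\mathfrak{a}_n=n^{1/\alpha}$. This will be a direct computation from the definitions; the whole point of Definition~\ref{def:cantor-psi} and Definition~\ref{def:cantor-phi} was precisely to engineer the observable so that this holds despite the observable being maximised on a fractal set.

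First, since $G$ preserves Lebesgue measure $\m$, the process $(X_n)_{n\ge 0}$ is stationary, so it suffices to analyse $\p(X_0>u)=\m\{\varphi_\alpha>u\}$. By Lemma~\ref{lem:cantor-alpha-reg}(c) applied with $n=0$,
$$\p(X_0>u)=\m\{\psi<F^{-1}(u^{-\alpha})\}.$$
Invoking Lemma~\ref{lem:proper-psi}(c)(iii), which states that $F\circ\psi$ is uniformly distributed on $[0,1]$ (so in particular $F(F^{-1}(z))=z$ for $z\in[0,1]$), this gives
$$\p(X_0>u)=u^{-\alpha}\quad\text{for every }u\ge 1.$$

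With $\mathfrak{a}_n:=n^{1/\alpha}$, for any fixed $y>0$ and all $n$ sufficiently large that $yn^{1/\alpha}\ge 1$, one has
$$n\,\p(X_0>y\mathfrak{a}_n)=n\cdot(yn^{1/\alpha})^{-\alpha}=y^{-\alpha},$$
so the limit as $n\to\infty$ is $y^{-\alpha}$, proving $\alpha$-regular variation.

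There is no real obstacle here: the entire difficulty has already been absorbed into the construction of $\psi$ (which provides a genuinely continuous symbolic distance to $\Lambda$) and into the composition with the quantile-type transformation $F^{-1/\alpha}$. Once Lemma~\ref{lem:proper-psi}(c)(iii) is in hand, the tail is not merely regularly varying but literally Pareto on $[1,\infty)$, which contrasts sharply with the discrete distributions produced by the Cantor-ladder observables of \cite{MP16,FFRS20,FFS21}.
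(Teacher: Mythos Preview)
Your proof is correct and follows essentially the same approach as the paper: both compute the tail directly from Lemma~\ref{lem:cantor-alpha-reg}(c) and the uniformity of $F\circ\psi$ established in Lemma~\ref{lem:proper-psi}(c)(iii), obtaining an exact Pareto tail and hence $\mathfrak{a}_n=n^{1/\alpha}$. The only cosmetic difference is that the paper works with $\{F\circ\psi<y^{-\alpha}\mathfrak{a}_n^{-\alpha}\}$ directly, whereas you pass through $\{\psi<F^{-1}(u^{-\alpha})\}$ before invoking uniformity; the content is identical.
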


\begin{proof}
We wish to find $(\mathfrak{a}_n)_{n \in \mathbb{N}}$ such that
$\lim_{n \to \infty}n\mathbb{P}(X_0>y\mathfrak{a}_n)=y^{-\alpha}.$  Since from Lemma~\ref{lem:cantor-alpha-reg},
$$\{X_0>y\mathfrak{a}_n\}=\{F\circ\psi<y^{-\alpha}\mathfrak{a}_n^{-\alpha}\},
$$
it follows that
$$\mathbb{P}(X_0>y\mathfrak{a}_n)=\mathbb{P}(F \circ \psi<y^{-\alpha}\mathfrak{a}_n^{-\alpha})=y^{-\alpha}\mathfrak{a}_n^{-\alpha}.$$
Hence we conclude that $\mathfrak{a}_n=n^{\frac{1}{\alpha}}$ for all $n \in \mathbb{N}$.
\end{proof}

In particular, this lemma means for each $\tau>0$ we can define $(u_n(\tau))_n$ so that 
\begin{equation}
\lim_{n \to \infty}n\mathbb{P}(X_0>u_n(\tau))=\tau, \text{ so } u_n(\tau) = \left(\frac n\tau\right)^{\frac1\alpha}\quad \text{and}\quad u_n^{-1}(z)=nz^{-\alpha}.
\label{eq:unexplicit}
\end{equation}

We next give a geometric description of this set.
By Lemma~\ref{lem:proper-psi} and \eqref{eq:unexplicit} there are $a_i=a_i(n,\tau) \in \{0,1\}$ so that,
\begin{equation*}
F^{-1}((u_n(\tau))^{-\alpha})= F^{-1}\left(\frac\tau n\right)=\sum_{i \in \mathbb{N}} a_i2^{-i}=\sum_{j \in \mathcal{J}_{n,\tau}} 2^{-j}=:u_{n,\tau}
\end{equation*}
where $\mathcal{J}_{n,\tau}=\{i \in \mathbb{N}: a_i=1\}$.

Let 
\begin{equation}
j_{n,\tau}=j_{n,\tau}^{1}:=\min\{j: j \in \mathcal{J}_{n,\tau}\}\;\text{and}\; j_{n,\tau}^{\ell+1}=\min\{j> j_{n,\tau}^{\ell}\colon j \in \mathcal{J}_{n,\tau}\},\;\text{for all}\; \ell\in\N.
\label{eq:j_n}
\end{equation}

If, for some $\ell > 1$, we have $a_i = 0$ for all $i \geq j_{n, r}^\ell$, then we set $ j_{n, r}^s =\infty$ for all $s\ge \ell$.
Note that Lemma~\ref{lem:proper-psi}(c)(ii) implies that $j_{n,\tau}$ grows logarithmically in $n$ (\ie, $\limsup_{n\to\infty}\frac{j_{n,\tau}}{\log n}<\infty$).
Moreover, by Lemma~\ref{lem:cantor-alpha-reg},
\begin{equation}\label{eq:cantor-U_n}
\begin{split}
U_n(\tau)=\{X_0>u_n(\tau)\}=\{\psi<u_{n,\tau}\}=\Lambda_{j_{n,\tau}} \cup H^{(j_{n,\tau})}
\end{split}
\end{equation}
where $H^{(j_{n,\tau})}$ denotes a subset of $\Lambda_{j_{n,\tau}-1} \setminus \Lambda_{j_{n,\tau}}$. 

\begin{remark}
\label{rem:Hjn}
Recall  Lemma~\ref{lem:proper-psi}(b) to justify the appearance of $\Lambda_{j_{n,\tau}}$. Observe that $u_{n,\tau}=2^{-j_{n,\tau}}+\delta$ with $\delta \leq 2^{-j_{n,\tau}}$, so, in particular, $H^{(j_{n,\tau})}=\emptyset$ if $\delta=0$ (which means that $ j_{n,\tau}^{\ell}=\infty$ for all $\ell\geq2$) and $H^{(j_{n,\tau})}=\Lambda_{j_{n,\tau}-1} \setminus (\Lambda_{j_{n,\tau}}\cup  \{p_1,\dots,p_{k_I^{j_{n,\tau}}}\})$, if $\delta=2^{-j_{n,\tau}}$ and where the $p_i$s are as in Lemma~\ref{lem:proper-psi}. Now, $H^{(j_{n,\tau})}$ can be further decomposed into
$$H^{(j_{n,\tau})}=\Lambda'_{j_{n,\tau}^2-j_{n,\tau}}\cup H^{(j_{n,\tau}^2)},$$ 
where $\Lambda'_{j_{n,\tau}^2-j_{n,\tau}}$ is homothetic to $\Lambda_{j_{n,\tau}^2-j_{n,\tau}}$, which was rescaled to fit the corresponding connected components of $\Lambda_{j_{n,\tau}-1} \setminus \Lambda_{j_{n,\tau}}$, and $H^{(j_{n,\tau}^2)}$ is a subset of $\Lambda'_{j_{n,\tau}^2-j_{n,\tau}}\sm\Lambda'_{j_{n,\tau}^2-j_{n,\tau}-1}$. Note that $\Lambda'_{j_{n,\tau}^2-j_{n,\tau}}$ can be written as a union of $j_{n,\tau}^2$-cylinders. This procedure can be repeated to obtain approximations of  $H^{(j_{n,\tau})}$ by unions of cylinders of greater and greater depth.

\end{remark}

\subsection{Extremal Index}\label{sec:cantor-EI}

Recall that
\begin{equation*}
\begin{split}
U_n^{(q)}(\tau)&=\{X_0(x)>u_n(\tau),X_1 \leq u_n(\tau),\dots,X_{q} \leq u_n(\tau)\}\\
&=\{\psi<u_{n,\tau},\psi\circ G \geq u_{n,\tau},\dots,\psi\circ G^{q} \geq u_{n,\tau}\}.
\end{split}
\end{equation*}
Observe that  for $n$ sufficiently large, we have
\begin{equation}\label{eq:cantor-U_n-q_n}
\begin{split}
U_n^{(1)}(\tau)&=\{ \psi<u_{n,\tau},\psi\circ G \geq u_{n,\tau}\}\\
&=\left(\Lambda_{j_{n,\tau}} \cup H^{(j_{n,\tau})}\right) \cap G^{-1}([0,1] \setminus (\Lambda_{j_{n,\tau}} \cup H^{(j_{n,\tau})}))\\
&=H^{(j_{n,\tau})} \cup (\Lambda_{j_{n,\tau}} \cap G^{-1}((\Lambda_{j_{n,\tau}-1} \setminus \Lambda_{j_{n,\tau}}) \setminus H^{(j_{n,\tau})}))
\end{split}
\end{equation}
Now, observing that the points of $U_n^{(1)}(\tau)$ are sent by $G$ into the holes of smaller and smaller generation in the construction of $\Lambda$, it follows that $U_n^{(1)}(\tau)=U_n^{(2)}(\tau)=U_n^{(3)}(\tau)=\ldots$. In light of Remark~\ref{rem:D-prime-two}, this means that $q_n=1$ is then a natural choice to prove condition $\D_{q_n}^{'*}$, which is indeed established in Section~\ref{subsec:dependence-conditions}.


The extremal index $\theta$ can be written as
\begin{equation*}
\theta=\lim_{n \to \infty} \dfrac{\m(U_n^{(1)}(\tau))}{\m(U_n(\tau))}
\end{equation*}
provided the limit exists.

\begin{proposition}\label{prop:cantor-EI}
For $G$ as defined in Section \ref{sec:cantor-set} and observable $\varphi_{\alpha}$ as given in Definition \ref{def:cantor-phi}, the extremal index is $\theta=1-\displaystyle\sum_{i=1}^{\I_I}\dfrac{1}{m_i} = 1-\lambda$. 
\end{proposition}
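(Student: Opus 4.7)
The plan is to show the stronger statement that $\m(U_n^{(1)}(\tau)) = (1-\lambda)\m(U_n(\tau))$ holds exactly for all $n$ large enough, so the ratio defining $\theta$ is constant (not just convergent). Starting from the decomposition \eqref{eq:cantor-U_n-q_n} of $U_n^{(1)}(\tau)$ into the disjoint union $H^{(j_{n,\tau})}\cup (\Lambda_{j_{n,\tau}}\cap G^{-1}((\Lambda_{j_{n,\tau}-1}\sm\Lambda_{j_{n,\tau}})\sm H^{(j_{n,\tau})}))$, I would first record the self-similarity identity $\m(\Lambda_n)=\lambda^n$: this follows by induction from \eqref{eq:Lambda_n}, since the $g_i^{-1}(\Lambda_{n-1})$ are disjoint, each $g_i$ is a linear bijection with slope $m_i$, and $\sum_{i=1}^{k_I}1/m_i = \lambda$. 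Consequently, $\m(\Lambda_{j_{n,\tau}-1}\sm\Lambda_{j_{n,\tau}})=(1-\lambda)\lambda^{j_{n,\tau}-1}$ and $\m(U_n(\tau))=\lambda^{j_{n,\tau}}+\m(H^{(j_{n,\tau})})$.

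The central computational step is the transfer identity
\begin{equation*}
\m\!\left(\Lambda_{j_{n,\tau}}\cap G^{-1}(A)\right)=\lambda\cdot\m(A)\qquad \text{for every measurable } A\subset\Lambda_{j_{n,\tau}-1},
\end{equation*}
which I would prove by writing $\Lambda_{j_{n,\tau}}=\bigsqcup_{i=1}^{k_I}g_i^{-1}(\Lambda_{j_{n,\tau}-1})$, noting that $G$ agrees with the linear bijection $g_i$ on each piece, and summing $\sum_i\m(A)/m_i=\lambda\m(A)$. Applying this with $A=(\Lambda_{j_{n,\tau}-1}\sm\Lambda_{j_{n,\tau}})\sm H^{(j_{n,\tau})}$ (which lies in $\Lambda_{j_{n,\tau}-1}$ because $H^{(j_{n,\tau})}\subset\Lambda_{j_{n,\tau}-1}\sm\Lambda_{j_{n,\tau}}$ by Remark~\ref{rem:Hjn}) yields
\begin{equation*}
\m\!\left(\Lambda_{j_{n,\tau}}\cap G^{-1}(A)\right)=\lambda\bigl((1-\lambda)\lambda^{j_{n,\tau}-1}-\m(H^{(j_{n,\tau})})\bigr)=(1-\lambda)\lambda^{j_{n,\tau}}-\lambda\,\m(H^{(j_{n,\tau})}).
\end{equation*}

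Adding the measure of $H^{(j_{n,\tau})}$ from the other piece of the decomposition collapses the expression neatly:
\begin{equation*}
\m(U_n^{(1)}(\tau))=\m(H^{(j_{n,\tau})})+(1-\lambda)\lambda^{j_{n,\tau}}-\lambda\,\m(H^{(j_{n,\tau})})=(1-\lambda)\bigl(\lambda^{j_{n,\tau}}+\m(H^{(j_{n,\tau})})\bigr)=(1-\lambda)\m(U_n(\tau)),
\end{equation*}
from which $\theta=1-\lambda$ follows immediately. The only thing requiring minor care is the justification for using the decomposition \eqref{eq:cantor-U_n-q_n}, which in turn relies on $G(H^{(j_{n,\tau})})\subset\Lambda_{j_{n,\tau}-2}\sm\Lambda_{j_{n,\tau}-1}$, so that $G(H^{(j_{n,\tau})})\cap U_n(\tau)=\emptyset$; this is exactly the part of the argument where one has to check that no one-step return can occur from the low-depth residual set $H^{(j_{n,\tau})}$, but it is a direct consequence of the characterisation $\Lambda_{j_{n,\tau}-1}\sm\Lambda_{j_{n,\tau}}=\{x\in\Lambda_{j_{n,\tau}-1}:G(x)\notin\Lambda_{j_{n,\tau}-1}\}$. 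No genuine obstacle arises; the self-similarity of $\Lambda$ does all the work, and the argument is in fact exact rather than asymptotic.
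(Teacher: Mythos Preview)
Your proof is correct and follows essentially the same approach as the paper: both use the decomposition \eqref{eq:cantor-U_n-q_n} and the self-similarity of the $\Lambda_n$ to show the ratio $\m(U_n^{(1)}(\tau))/\m(U_n(\tau))$ equals $1-\lambda$ exactly, not just in the limit. Your transfer identity $\m(\Lambda_j\cap G^{-1}(A))=\lambda\,\m(A)$ is a clean explicit formulation of what the paper expresses via the $\kappa$-parametrisation (``only a fraction $(1-\kappa)$ of $\Lambda_j\setminus\Lambda_{j+1}$ is iterated to the complement of $H^{(j)}$''), but the underlying computation is the same.
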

\begin{proof}
Let $u_{n,\tau} \in [2^{-j},2^{-j+1})$. Then, $j_{n,\tau}=j$ and, by (\ref{eq:cantor-U_n}), $U_n(\tau)=\Lambda_j \cup H^{(j)}$ so that
\begin{equation}\label{eq:cantor-mu-U_n}
\m(U_n(\tau))=\m(\Lambda_j)+\kappa \m(\Lambda_{j-1} \setminus \Lambda_j)
\end{equation}
where $\kappa \in [0,1]$ since $H^{(j)}$ is a fraction of $\Lambda_{j-1} \setminus \Lambda_j$.

Now, by (\ref{eq:cantor-U_n-q_n}),
\begin{equation*}\label{eq:cantor-U_n-q_n-2}
U_n^{(1)}(\tau)=H^{(j)} \cup (\Lambda_j \cap G^{-1}((\Lambda_{j-1} \setminus \Lambda_j) \setminus H^{(j)}))
\end{equation*}
and we have
\begin{equation}\label{eq:cantor-mu-U_n-q_n}
\m(U_n^{(1)}(\tau))=\kappa \m(\Lambda_{j-1} \setminus \Lambda_j)+(1-\kappa)\m(\Lambda_j \setminus \Lambda_{j+1})
\end{equation}
where $\kappa$ is the same as in \eqref{eq:cantor-mu-U_n} (in particular, observe that the second summand reflects the fact that only a fraction of $\Lambda_j \setminus \Lambda_{j+1}$, whose measure is $(1-\kappa)\m(\Lambda_j \setminus \Lambda_{j+1})$, is iterated in one time step to the complement of $H^{(j)}$ in $\Lambda_{j-1} \setminus \Lambda_j$).

\eqref{eq:cantor-mu-U_n} can be rewritten
\begin{equation*}
\m(U_n(\tau))=\left(\sum_{i=1}^{\I_I}\dfrac{1}{m_i}\right) \m(\Lambda_{j-1})+ \kappa\left(1-\sum_{i=1}^{\I_I}\dfrac{1}{m_i}\right) \m(\Lambda_{j-1})=\lambda \m(\Lambda_{j-1})+ \kappa\left(1-\lambda\right) \m(\Lambda_{j-1})
\end{equation*}

and \eqref{eq:cantor-mu-U_n-q_n} can be rewritten
\begin{equation*}
\m(U_n^{(1)}(\tau))=\kappa\left(1-\lambda\right) \m(\Lambda_{j-1}) + (1-\kappa) \left(1-\lambda\right)\lambda\m(\Lambda_{j-1})
\end{equation*}

Finally,
\begin{equation*}
\begin{split}
\theta_n=\dfrac{\m(U_n^{(1)}(\tau))}{\m(U_n(\tau))}&=\dfrac{\kappa\left(1-\displaystyle\lambda\right)+(1-\kappa) \left(1-\lambda\right)\lambda}{\lambda+\kappa\left(1-\lambda\right)}=\dfrac{\kappa \left(1-\lambda\right) \left(1-\lambda\right)+\left(1-\lambda\right)\lambda}{\lambda+\kappa \left(1-\lambda\right)}\\
&=\left(1-\lambda\right)\dfrac{\kappa \left(1-\lambda\right)+\lambda}{\lambda+\kappa \left(1-\displaystyle\lambda\right)}=1-\displaystyle\lambda
\end{split}
\end{equation*}

Therefore, $\theta=\displaystyle\lim_{n \to \infty} \theta_n=1-\displaystyle\lambda$.
\end{proof}

\begin{remark}
Observe that with $\Lambda$ equal to the middle-$\frac{1}{3}$ Cantor set as described in Section~\ref{sec:cantor-set} we obtain $\theta=\dfrac{1}{3}$.
\end{remark}

\subsection{Dependence requirements}\label{subsec:dependence-conditions}
In order to prove that the conditions $\D_{q_n}$ and $\D'_{q_n}$ from Section~\ref{ssec:DsEI} hold in our setting we will approximate the sets $A_{n,\ell}^{(q_n)}$  by a union of cylinders of controlled depth and then use the excellent mixing properties on cylinders  in the spirit of \cite{HP14,DHY21,BF24}, for example.

Since $A_{n,\ell}^{(q_n)}$ is itself a finite union of sets of the form  $U_n(\tau', \tau''):=U_n(\tau')\setminus U_n(\tau'')$, for $\tau''<\tau'$, we restrict the analysis to these building blocks in order to simplify the already heavy notation.

Since
\begin{equation}\label{eq:cantor-A_n,l}
	\begin{split}
		U_n(\tau', \tau'')&=\left(\Lambda_{j_{n,\tau'}} \cup H^{(j_{n,\tau'})}\right) \setminus \left(\Lambda_{j_{n,\tau''}} \cup H^{(j_{n,\tau''})}\right)\\
		&=H^{(j_{n,\tau'})} \cup \left(\Lambda_{j_{n,\tau'}} \setminus \left(\Lambda_{j_{n,\tau''}} \cup H^{(j_{n,\tau''})}\right)\right),
	\end{split}
\end{equation}
$U_n(\tau', \tau'')$ is made up of holes in the dynamical construction of $\Lambda$ (\textit{i.e.} subsets of the form $\Lambda_{k} \setminus \Lambda_{k+1}$ where $k \in \mathbb{N}$), where $j_{n,\tau'}$ is as in \eqref{eq:j_n}.
Moreover, as observed earlier, for $n$ sufficiently large,
\begin{equation}\label{eq:cantor-A_n,l-q_n}
	\begin{split}
		U_n(\tau', \tau'')^{(q)}&=U_n(\tau', \tau'') \cap G^{-1}(U_n(\tau', \tau''))^c \cap \dots \cap G^{-q}(U_n(\tau', \tau''))^c\\
		&=U_n(\tau', \tau'') \cap G^{-1}(U_n(\tau', \tau''))^c
	\end{split}
\end{equation}
so, in particular, taking $q_n=1$ for all $n \in \mathbb{N}$ is the natural choice to prove condition $\D_{q_n}^{'*}$, which is established in  \ref{subsec:dependence-conditions}.

Combining \eqref{eq:cantor-A_n,l} and \eqref{eq:cantor-A_n,l-q_n}, we obtain 
\begin{equation*}\label{eq:cantor-A_n,l-q_n-2}
	U_n(\tau', \tau'')^{(1)}=H^{(j_{n,\tau'})} \ \bigcup \left\{\left((\Lambda_{j_{n,\tau'}} \setminus \Lambda_{j_{n,\tau''}}) \setminus H^{(j_{n,\tau''})}\right) \cap G^{-1}\left((\Lambda_{j_{n,\tau'}-1} \setminus \Lambda_{j_{n,\tau'}}) \setminus H^{(j_{n,\tau'})}\right)\right\}.
\end{equation*}

For a Borel set $A\subset [0,1]$ and $j \in \mathbb{N}$, we define 
	\[
	\Upsilon^+(A, j) := \bigcup_{\omega \in \mathcal P_{j} \colon \omega \cap A \neq \emptyset} \omega \qquad\text{and}\qquad \Upsilon^-(A,j) := \bigcup_{\omega \in \mathcal P_{j} \colon \omega \subset A} \omega.
	\]
	They are the approximations of $A$ from above and below by $j$-cylinders.

\begin{lemma}\label{lem:cantor-cyl-approx}
Let $U_n(\tau', \tau'')^{(1)}$ be given and let $j_{n,\tau'}$ be associated to this set as in \eqref{eq:j_n}. Let 
	$\Gamma_n^{+}=\Upsilon^+(U_n(\tau', \tau'')^{(1)}, 2j_{n,\tau'})$ and $\Gamma_n^{-}=\Upsilon^-(U_n(\tau', \tau'')^{(1)}, 2j_{n,\tau'})$ be the respective outer and inner approximations by unions of $(2j_{n,\tau'})$-cylinders. Then there exists $(\rho_n)_n$ such that
	\begin{equation}\label{eq:cantor-cyl-approx}
		\dfrac{\m(\Gamma_n^{+} \setminus \Gamma_n^{-})}{\m\left(U_n(\tau', \tau'')^{(1)}\right)} \leq \rho_n
	\end{equation}
	where $\displaystyle\lim_{n \to \infty} \rho_n=0$.
\end{lemma}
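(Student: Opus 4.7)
The plan is to exploit the nested self-similar structure of $U_n(\tau', \tau'')^{(1)}$, encoded by iteratively applying the decomposition in Remark~\ref{rem:Hjn}. By repeating this decomposition, $U_n(\tau', \tau'')^{(1)}$ can be written as a disjoint union of ``Cantor slabs'', each specified by finitely many symbolic constraints at successive scales drawn from $\mathcal J_{n,\tau'}\cup\mathcal J_{n,\tau''}$. Each slab has a \emph{natural depth} at which it is exactly a union of cylinders of that depth.

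The key observation is that any slab whose natural depth is at most $2j_{n,\tau'}$ is already a union of $(2j_{n,\tau'})$-cylinders (by refining $\mathcal P^{j}$ to $\mathcal P^{2j_{n,\tau'}}$), and therefore contributes identically to $\Gamma_n^+$ and $\Gamma_n^-$. The only non-cancellation in $\Gamma_n^+\setminus\Gamma_n^-$ comes from slabs of natural depth strictly greater than $2j_{n,\tau'}$; each such slab lies inside a $(2j_{n,\tau'})$-cylinder which, after making a ``hole'' transition at some scale $\ell\le 2j_{n,\tau'}$, carries out at least $j_{n,\tau'}$ further internal Cantor iterations. Summing over these enclosing cylinders, and using that within each macro-hole the $j_{n,\tau'}$-th Cantor sublevel has relative measure $\lambda^{j_{n,\tau'}}$, I would bound $\m(\Gamma_n^+\setminus\Gamma_n^-)\le C\lambda^{j_{n,\tau'}}(\tau'-\tau'')/n$. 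Combined with the lower bound $\m(U_n(\tau',\tau'')^{(1)})\ge c(1-\lambda)(\tau'-\tau'')/n$, obtained analogously to the computations in the proof of Proposition~\ref{prop:cantor-EI}, this yields $\rho_n\le C'\lambda^{j_{n,\tau'}}$, which tends to $0$ since $\lambda<1$ and $j_{n,\tau'}\to\infty$ as $u_{n,\tau'}\to 0$.

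The main obstacle is the bookkeeping needed to identify precisely which slabs fall beyond depth $2j_{n,\tau'}$: this depends intricately on the positional pattern of the nonzero binary digits of $u_{n,\tau'}$ and $u_{n,\tau''}$. The most delicate case is when the gap $j_{n,\tau'}^{2}-j_{n,\tau'}^{1}$ exceeds $j_{n,\tau'}$, so that even the ``bulk'' piece $\Lambda'_{j_{n,\tau'}^{2}-j_{n,\tau'}}$ of Remark~\ref{rem:Hjn} fails to be resolved by $(2j_{n,\tau'})$-cylinders. In that regime one must verify that $\Gamma_n^+$ does not inflate by more than the $\lambda^{j_{n,\tau'}}$ factor claimed above; this follows because $H^{(j_{n,\tau'})}$, regardless of its internal structure, is always contained in the $j_{n,\tau'}$-th Cantor sublevel of its enclosing macro-hole, whose relative measure is precisely $\lambda^{j_{n,\tau'}}$, providing a uniform bound independent of how the higher-order digits of $u_{n,\tau'}$ and $u_{n,\tau''}$ are distributed.
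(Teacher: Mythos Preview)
Your proposal follows essentially the same route as the paper: both exploit the iterative decomposition of $H^{(j_{n,\tau'})}$ from Remark~\ref{rem:Hjn} to identify which pieces of $U_n(\tau',\tau'')^{(1)}$ are already unions of $(2j_{n,\tau'})$-cylinders and to bound the remaining deep pieces, both bound the denominator via the extremal-index style computation of Proposition~\ref{prop:cantor-EI}, and both handle the delicate case $j_{n,\tau'}^{2}-j_{n,\tau'}>j_{n,\tau'}$ by observing that $H^{(j_{n,\tau'})}$ is then contained in a scaled copy $\Lambda'_{j_{n,\tau'}}$ of the $j_{n,\tau'}$-th Cantor level sitting inside the macro-holes. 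The only cosmetic difference is that the paper phrases the numerator bound as $\m(\Gamma_n^{+}\setminus\Gamma_n^{-})\le C\eta^{2j_{n,\tau'}}$ (in terms of the maximal branch length $\eta$) while you phrase it as $C\lambda^{j_{n,\tau'}}(\tau'-\tau'')/n$; since $(\tau'-\tau'')/n\asymp\lambda^{j_{n,\tau'}}$ these encode the same geometric decay, and your formulation in terms of the relative density $\lambda^{j_{n,\tau'}}$ is arguably the more transparent one.
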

\begin{proof}
	We write
	\begin{equation*}\label{eq:A_n,l-1-approx}
		U_n(\tau', \tau'')^{(1)}=H^{(j_{n,\tau'})} \cup K
	\end{equation*}
	where $K \subset \Lambda_{j_{n,\tau'}} \setminus \Lambda_{j_{n,\tau'}+1}$. Therefore, 
	\begin{equation}
	\label{cyl-estimate1}
	\begin{split}
	\m\left(U_n(\tau', \tau'')^{(1)}\right)&=\beta \m(\Lambda_{j_{n,\tau'}-1} \setminus \Lambda_{j_{n,\tau'}})+\gamma \m(\Lambda_{j_{n,\tau'}} \setminus \Lambda_{j_{n,\tau'}+1})\\
	&=\beta \left(1-\lambda\right) \m(\Lambda_{j_{n,\tau'}-1}) + \gamma \left(1-\lambda\right) \m(\Lambda_{j_{n,\tau'}})\\
	&=\beta \left(1-\lambda\right) \m(\Lambda_{j_{n,\tau'}-1}) + \gamma \left(1-\lambda\right)\lambda\m(\Lambda_{j_{n,\tau'}-1})\\
	&=\left( \beta \left(1-\lambda\right) + \gamma \left(1-\lambda\right)\lambda \right) \m(\Lambda_{j_{n,\tau'}-1})\\
	&=\left(\beta\theta+\gamma\theta(1-\theta)\right) \lambda^{j_{n,\tau'}-1}
	\end{split}
	\end{equation}
	where $\beta,\gamma \in [0,1]$ and $\theta$ is as in Proposition \ref{prop:cantor-EI} (as $H^{(j_{n,\tau'})}$ is a fraction of $\Lambda_{j_{n,\tau'}-1} \setminus \Lambda_{j_{n,\tau'}}$ while $K$ is a fraction of $\Lambda_{j_{n,\tau'}} \setminus \Lambda_{j_{n,\tau'}+1}$). Observe that the larger the set $H^{(j_{n,\tau'})}$ is, the smaller the set $K$ is and vice-versa. This means that, in particular, $\beta$ and $\gamma$ cannot be simultaneously equal to $0$ or to $1$. 
	
	On the other hand, taking into consideration the definition of the function $\psi$ and of the approximating cylinders, as well as the fact that $G$ is a full branched Markov linear map (which, in particular, means that there exist $\tilde C>0$ and $0<\eta<1$ such that $\m(\omega)\leq \tilde C\eta^n$, for all $n\in\N$ and $\omega\in\mathcal P^n$), we claim that 
	\begin{equation}
	\label{cyl-estimate2}
	\m(\Gamma_n^+\setminus \Gamma_n^-)\leq C\eta^{2j_{n,\tau'}},\quad\text{for some}\quad C>0.
	\end{equation}
	 To see this, we start by noting that since $K$ is a preimage of the complement of $H^{(j_{n,\tau'})}$ in $\Lambda_{j_{n,\tau'}-1} \setminus \Lambda_{j_{n,\tau'}}$, we only need to check that $\m\left(\Upsilon^+(H^{(j_{n,\tau'})},2j_{n,\tau'})\setminus\Upsilon^-(H^{(j_{n,\tau'})},2j_{n,\tau'})\right)\leq C\eta^{2j_{n,\tau'}}$ for some $C>0$. 
	 
	 Recalling Remark~\ref{rem:Hjn}, assume first that $j_{n,\tau'}^2>2j_{n,\tau'}$. This means that $H^{(j_{n,\tau'})}\subset \Lambda_{j_{n,\tau'}}'$, where $\Lambda_{j_{n,\tau'}}'$ is homothetic to $\Lambda_{j_{n,\tau'}}$, which was scaled to fit in one the the holes of $\Lambda_{j_{n,\tau'}}$ that comprise $\Lambda_{j_{n,\tau'}-1}\setminus \Lambda_{j_{n,\tau'}}$ in order to give rise to $\Lambda_{j_{n,\tau'}}'$. Hence, since $\emptyset\subset\Upsilon^-(H^{(j_{n,\tau'})}\subset \Upsilon^+(H^{(j_{n,\tau'})}\subset \Lambda_{j_{n,\tau'}}'$, we have that $$\m\left(\Upsilon^+(H^{(j_{n,\tau'})},2j_{n,\tau'})\setminus\Upsilon^-(H^{(j_{n,\tau'})},2j_{n,\tau'})\right)\leq \m( \Lambda_{j_{n,\tau'}}')\leq C\eta^{2j_{n,\tau'}},$$ for some $C>0$. 
	
	 Now, if $j_{n,\tau'}^2\leq 2j_{n,\tau'}$, we decompose the set $H^{(j_{n,\tau'})}=\Lambda'_{j_{n,\tau}^2-j_{n,\tau}}\cup H^{(j_{n,\tau}^2)}$ further, as in Remark~\ref{rem:Hjn}. Then, observe that $\Lambda'_{j_{n,\tau}^2-j_{n,\tau}}$ can clearly be written as a union of cylinders of $\mathcal P^{2j_{n,\tau'}}$, which means that we only need to estimate the measure of $\Upsilon^+(H^{(j_{n,\tau}^2)},2j_{n,\tau'})\setminus\Upsilon^-(H^{(j_{n,\tau}^2)},2j_{n,\tau'})$. We now consider the case $j_{n,\tau'}^3\leq 2j_{n,\tau'}$, which will lead us to the same conclusion, as before, and otherwise proceed to another decomposition of $H^{(j_{n,\tau}^2)}$ until we end by exhaustion.

 The result now follows from gathering estimates \eqref{cyl-estimate1} and \eqref{cyl-estimate2}.	
\end{proof}

It is a direct consequence of Lemma \ref{lem:cantor-cyl-approx} that the REPP counting the number of hits to $U_n(\tau', \tau'')^{(1)}$ converges if and only if the REPP counting the number of hits to either $\Gamma_n^{+}$ or $\Gamma_n^{-}$ converges, and the limits are the same (see \cite[Lemma~3.13]{BF24}, for example).

As a result, we only need to check that the conditions $\D_{q_n}^*$ and $\D^{'*}_{q_n}$ are verified for the cylinder approximating sets. To this end, we next define a strong form of mixing, which is a property of the maps we are considering here.

\begin{definition}\label{def:phi-mixing} Let $(\mathcal{X},\mathcal{B}_\mathcal{X},\mu,f)$ be a probability preserving system. Let $\mathcal{F}_{0,k}$ denote the $\sigma$-algebra generated by $\mathcal P^{k}$. $(\mathcal{X},\mathcal{B}_\mathcal{X},\mu,f)$ is \textit{exponentially $\phi$-mixing} if there exist $C, c>0$  such that for every $H \in \mathcal{F}_{0,k}$ and every $A \in \mathcal{B}_{\mathcal{X}}$
	\begin{equation}\label{eq:exp-phi-mix}
		\modl{\mu(H \cap f^{-(t+k)}(A)) - \mu(H)\mu(A)} \leq C\e^{-ct}\mu(A).
	\end{equation}
\end{definition}

We remark  that $G$ is exponentially $\phi$-mixing, see for example \cite[Theorem~1]{AN05}.
Recall that, as observed above, we only need to check the conditions $\D_{q_n}^{*}$, $\D_{q_n}^{'*}$, where the set $A_{n,\ell}^{(q_n)}$ is replaced by the approximations $\Gamma_n^+$ and $\Gamma_n^-$. In what follows, we will only check such conditions for $\Gamma_n^+$, but the argument applies with minor adjustments to  $\Gamma_n^-$. 

\begin{proof}[Proof of Condition $\D_{q_n}^{*}$]
	We need to control:
	$$
	E_n:=\left\lvert \m\left(\Gamma_n^+ \cap \bigcap_{i=\ell}^{N} \mathscr{W}_{J_{n,i}}\left(\Gamma_n^+ \right)\right) - \m\left(\Gamma_n^+ \right)\m\left(\bigcap_{i=\ell}^{N} \mathscr{W}_{J_{n,i}}\left(\Gamma_n^+ \right)\right) \right\rvert 
	$$
	
	We start by noting that $E_n\leq 2\m(\Gamma_n^+)$. Also, observe that $\Gamma_n^{+}\in\mathcal{F}_{0,2j_{n,\tau'}}$.  Assuming that $t>2j_{n,\tau'}$ and  writing (\ref{eq:exp-phi-mix}) with $H=\Gamma_n^{+}$ and $f^{-(t-2j_{n,\tau'})+2j_{n,\tau'}}(A)=\bigcap_{i=l}^{m} \mathscr{W}_{J_{n,i}}(\Gamma_n^{+})$, we estimate $E_n\leq  C\e^{-c(t-2j_{n,\tau'})}$. Hence, we can set $\gamma(n,t)= \max\left\{2,C\e^{-c(t-2j_{n,\tau'})}\right\}$.
	
	Since $j_{n,\tau'}$ grows logarithmically in $n$, we can choose $t_n=o(n)$ and satisfying conditions \eqref{eq:kn-sequence} so that $\lim_{n\to\infty}n\gamma(n,t_n)=0$.	
\end{proof}

\begin{proof}[Proof of Condition $\D_{q_n}^{'*}$]
	We start by noting that, by construction, the points of $U_n(\tau', \tau'')^{(1)}$ are sent by $G$ into the holes that appeared in the $(j_{n,\tau'}-1)$-th and/or the $(j_{n,\tau'}-2)$-th step of the construction of $\Lambda$ and then they are sent successively by $G$ to lower and lower ranked holes in the construction until they reach the first holes of the construction and only then can they return to $U_n(\tau', \tau'')^{(1)}$. Using this fact and the exponential $\phi$ mixing property of $G$, we have:
	\begin{equation*}
		\begin{split}
			&n\m\left(\Gamma_n^{+} \cap \mathscr{W}^c_{[q_n+1,r_n)}\left(A_{n, (\bar\tau_i)_i, N} \right)\right)
			\leq n\sum_{j=j_{n,\tau'}-1}^{r_n-1} \m\left(\Gamma_n^{+} \cap f^{-j}\left(\Gamma_n^+\right)\right)\\
			&\leq n\sum_{j=j_{n,\tau'}-1}^{2j_{n,\tau'}} \m\left(\Gamma_n^+ \cap f^{-j}\left(\Gamma_n^+\right)\right)+  n\sum_{j=2j_{n,\tau'}}^{r_n-1} \m\left(\Gamma_n^{+} \cap f^{-j}\left(\Gamma_n^+\right)\right)\\
			&\leq n\sum_{j=j_{n,\tau'}-1}^{2j_{n,\tau'}} \m\left(\Gamma_n^+ \cap f^{-j}\left(\Gamma_n^+\right)\right)+ n\sum_{j=2j_{n,\tau'}}^{r_n-1} \m(\Gamma_n^{+})\m(\Gamma_n^{+}) + n \m(\Gamma_n^{+})\sum_{j=2j_{n,\tau'}}^{r_n-1}C\e^{-cj} \\
			&\leq n\sum_{j=j_{n,\tau'}-1}^{2j_{n,\tau'}} \m\left(\Upsilon^+(U_n(\tau', \tau'')^{(1)}, j) \cap f^{-j}\left(\Gamma_n^+\right)\right)+\tilde C\frac{n^2\left(\m(\Gamma_n^{+})\right)^2}{k_n}+\hat C n\m(\Gamma_n^{+})\sum_{j=2j_{n,\tau'}}^{\infty}C\e^{-cj},			\end{split}
	\end{equation*}
for some $\tilde C, \hat C>0$. The second term goes to $0$ by \eqref{eq:un}, Lemma~\ref{lem:cantor-cyl-approx} and the fact that $k_n\to\infty$. The third term goes to $0$ by \eqref{eq:un} and the fact that $j_{n,\tau'}\to\infty$.

Now, to handle the first term, we write $\Upsilon^+(U_n(\tau', \tau'')^{(1)}, j)=\bigcup_{\omega\in\mathcal P_j\colon \omega\subset\Upsilon^+(U_n(\tau', \tau'')^{(1)}, j)}\omega$, where the union is disjoint. Since $G$ is a full branched linear Markovian map, we have
$$
\m\left(\omega\cap f^{-j}\left(\Gamma_n^+\right)\right)\leq C \m(\omega)\m\left(\Gamma_n^+\right).
$$
It follows that 
\begin{equation*}
\begin{split}
 n\sum_{j=j_{n,\tau'}-1}^{2j_{n,\tau'}}& \m\left(\Upsilon^+(U_n(\tau', \tau'')^{(1)}, j)\cap f^{-j}\left(\Gamma_n^+\right)\right)\\
 &\leq  n\sum_{j=j_{n,\tau'}-1}^{2j_{n,\tau'}} \sum_{\omega\in\mathcal P_j\colon \omega\subset\Upsilon^+(U_n(\tau', \tau'')^{(1)}, j)}\m\left(\omega\cap f^{-j}\left(\Gamma_n^+\right)\right)\\
 &\leq  n\sum_{j=j_{n,\tau'}-1}^{2j_{n,\tau'}} \sum_{\omega\in\mathcal P_j\colon \omega\subset\Upsilon^+(U_n(\tau', \tau'')^{(1)}, j)}C \m(\omega)\m\left(\Gamma_n^+\right)\\
 &\leq  Cn\sum_{j=j_{n,\tau'}-1}^{2j_{n,\tau'}}  \m\left(\Upsilon^+(U_n(\tau', \tau'')^{(1)}, j)\right)\m\left(\Gamma_n^+\right)\leq \hat C\frac{j_{n,\tau'}}{n}\xrightarrow[n\to\infty]{}0
 \end{split}
\end{equation*}
	\end{proof}

\subsection{Anchored tail process}\label{sec:cantor-piling}
We will see that for our $G$ and $\varphi_{\alpha}$ the Cantor set $\Lambda$ behaves like a fixed point.

\begin{proposition}\label{prop:cantor-piling}
Let $G$ be as defined in Section \ref{sec:cantor-set} and let $\varphi_{\alpha}$ be as in Definition \ref{def:cantor-phi}. Then, the anchored tail process is (a.s.) the bi-infinite sequence $(Z_j)_{j \in \mathbb{Z}}$ with entry $U.(1-\theta)^{-j}$ at $j \in \mathbb{N}_0$ and $\infty$ otherwise, where $U$ is uniformly distributed on $[0,1]$ and $\theta$ is the extremal index (\textit{cf.} Proposition \ref{prop:cantor-EI}).
\end{proposition}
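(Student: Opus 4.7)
The plan is to compute the limiting tail process $(Y_j)_{j\in\Z}$ directly from its defining relation in Section~\ref{ssec:seqtail} and then apply Definition~\ref{def:piling-process} to read off $(Z_j)_{j\in\Z}$.

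For the forward part $j\geq 0$, I will first establish the self-similarity $F(s)=\lambda F(2s)$ for $s\in[0,\tfrac12]$. Indeed, $\psi(x)\leq 1/2$ forces $x_1\in\I_I$ (otherwise the first symbol already contributes $2^{-1}$), so $\psi(G(x))=2\psi(x)$, and since $G$ restricted to each $I_i$ pushes Lebesgue measure linearly onto Lebesgue measure on $[0,1]$, the identity follows. Iterating yields $F(2^k s)=\lambda^{-k}F(s)$ whenever $s\leq 2^{-k}$. Conditioning on $X_{r_n}>u_n(\tau)$ places $y:=G^{r_n}(x)$ in $U_n(\tau)=\Lambda_{j_{n,\tau}}\cup H^{(j_{n,\tau})}$; the first $j_{n,\tau}-1$ symbols of $y$ lie in $\I_I$, so $\psi(G^k(y))=2^k\psi(y)$ for $k<j_{n,\tau}$. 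Combined with $u_n^{-1}(z)=nz^{-\alpha}=nF(\psi)$ and the scaling of $F$, one obtains $Y_k=\tfrac{n}{\tau}F(\psi(G^k(y)))=\lambda^{-k}Y_0$ for all such $k$; since $j_{n,\tau}\to\infty$, this extends to every fixed $k\geq 0$ in the limit. Moreover, Lemma~\ref{lem:proper-psi}(c)(iii) tells us that $F\circ\psi$ is uniform on $[0,1]$, so $Y_0$ conditional on $F\circ\psi(y)<\tau/n$ is uniform on $[0,1]$.

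For the backward part $j\leq -1$, I will examine the preimages of $y\in U_n(\tau)$ under $G$. Of the $\hat k$ preimages, the $k_I$ lying in the branches $I_i$ belong to $\Lambda_{j_{n,\tau}+1}$, yielding a \emph{continuation} $Y_{-1}=\lambda Y_0$ via the inverse scaling $F(s/2)=\lambda F(s)$; the $k_J$ preimages lying in the branches $J_j$ sit at macroscopic distance from $\Lambda$, so $F(\psi(G^{r_n-1}(x)))$ stays bounded below and $Y_{-1}=\tfrac{n}{\tau}F(\psi(G^{r_n-1}(x)))\to\infty$. A Lebesgue-measure computation mirroring the extremal-index calculation of Proposition~\ref{prop:cantor-EI} shows the continuation event has conditional probability $\lambda=1-\theta$, independent of $Y_0$ in the limit. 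Iterating, $Y_{-k}$ is either $\lambda^k Y_0$ (with probability $\lambda^k$) or $\infty$; since $Y_0\leq 1$ and $\lambda<1$, $Y_{-k}<1$ whenever $Y_{-k}\neq\infty$.

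Finally applying the anchor: because $\lambda^k Y_0\leq Y_0\leq 1$, the event $\{\inf_{j\leq -1}Y_j\geq 1\}$ forces $Y_j=\infty$ for every $j\leq -1$ and has probability $\theta$ independent of $Y_0$; conditioning therefore leaves $Y_0$ uniform on $[0,1]$ and gives $Z_0=U\sim\mathrm{Unif}[0,1]$, $Z_j=U(1-\theta)^{-j}$ for $j\geq 0$, and $Z_j=\infty$ for $j\leq -1$, as claimed. The main obstacle will be controlling the \emph{overflow} set $H^{(j_{n,\tau})}\subseteq\Lambda_{j_{n,\tau}-1}\setminus\Lambda_{j_{n,\tau}}$: points there do not have all their first $j_{n,\tau}$ symbols in $\I_I$, so the clean scaling $\psi(G^k(y))=2^k\psi(y)$ can fail. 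However, by Remark~\ref{rem:Hjn}, $H^{(j_{n,\tau})}$ decomposes into cylinders of depth $\geq j_{n,\tau}$ on which an analogous scaling applies, and the inner/outer cylinder approximations of Lemma~\ref{lem:cantor-cyl-approx} show the resulting contribution is asymptotically negligible.
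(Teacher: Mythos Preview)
Your approach is essentially the same as the paper's: both compute the forward part of $(Y_j)$ via the relation $\psi(G^k(y))=2^k\psi(y)$ for $y\in U_n(\tau)$ and the self-similarity $F(2^ks)=\lambda^{-k}F(s)$, identify $Y_0$ as uniform using Lemma~\ref{lem:proper-psi}(c)(iii), and handle the backward part by observing that a preimage of $y$ lies either in an $I$-branch (continuation, giving $Y_{-1}=\lambda Y_0<1$) or a $J$-branch (giving $Y_{-1}\to\infty$), so that the anchor $\{\inf_{j\le -1}Y_j\ge 1\}$ forces all negative entries to be $\infty$.

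One small point: your final paragraph treating $H^{(j_{n,\tau})}$ as an ``obstacle'' is unnecessary. You already observed correctly that every $y\in U_n(\tau)=\Lambda_{j_{n,\tau}}\cup H^{(j_{n,\tau})}\subset\Lambda_{j_{n,\tau}-1}$ has its first $j_{n,\tau}-1$ symbols in $\I_I$, so the scaling $\psi(G^k(y))=2^k\psi(y)$ holds for all $k\le j_{n,\tau}-1$; since condition~(1) only concerns fixed finite windows $s\le j\le t$ and $j_{n,\tau}\to\infty$, this already covers every $k$ you need. No separate cylinder-approximation argument is required here (Lemma~\ref{lem:cantor-cyl-approx} is used in the paper only for the dependence conditions, not for the tail process).
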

\begin{proof}
We check that the process $(Y_j)_{j \in \mathbb{Z}}$ is (a.s.) the bi-infinite sequence with:
\begin{enumerate}[(i)]
\item entry $U$ at $j=0$;
\item entries $U.(1-\theta)^{-j}$ for all positive indices $j$;
\item $\infty$ for all negative indices $j$ except, possibly, $U.(1-\theta)^{-j}$ at $j \geq -m$ for some $m \in \mathbb{N}$;
\end{enumerate}
where $U$ is uniformly distributed on $[0,1]$.

Observe that $U.(1-\theta)^{-j}<1$ for all $j<0$. Therefore, if $(Y_j)_{j \in \mathbb{Z}}$ is as described by (i)-(iii) then $(Z_j)_{j \in \mathbb{Z}}$ is as in the statement of the theorem.

We must check that conditions (1)-(4) in Section~\ref{ssec:seqtail} are satisfied for $(Y_j)_{j \in \mathbb{Z}}$ as given by (i)-(iii).

Conditions (2) and (3) are straightforward.

As for condition (4), $Y_j \geq 1$ for all $j \leq -1$ implies that the hit to $\Lambda_{j_{n,\tau}} \cup H^{(j_{n,\tau})}$ at time $r_n$ is the first hit to that same neighbourhood of $\Lambda$ (\textit{i.e.} there is no hit to $\Lambda_{j_{n,\tau}} \cup H^{(j_{n,\tau})}$ before time $r_n$ given that there is a hit to $\Lambda_{j_{n,\tau}} \cup H^{(j_{n,\tau})}$ at time $r_n$ - recall, from (\ref{eq:cantor-U_n}), that $\{X_{r_n}>u_n(\tau)\}=G^{-r_n}(\Lambda_{j_{n,\tau}} \cup H^{(j_{n,\tau})})$). Since
\begin{equation*}
\dfrac{\m(G^{-r_n}(\Lambda_{j_{n,\tau}} \cup H^{(j_{n,\tau})}) \cap G^{-(r_n-1)}([0,1] \setminus (\Lambda_{j_{n,\tau}} \cup H^{(j_{n,\tau})})))}{\m(G^{-r_n}(\Lambda_{j_{n,\tau}} \cup H^{(j_{n,\tau})}))}>0,
\end{equation*}
 condition (4) is satisfied.

So, we check that condition (1) holds. Recall that $u_n^{-1}(X_j)=\left(\dfrac{X_j}{\mathfrak{a}_n}\right)^{-\alpha}$. Therefore,
\begin{equation*}
\left\lbrace\dfrac{1}{\tau}u_n^{-1}(X_{r_n+j}) \given X_{r_n}>u_n(\tau)\right\rbrace=\left\lbrace\dfrac{1}{\tau}\left(\dfrac{X_{r_n+j}}{\mathfrak{a}_n}\right)^{-\alpha} \given X_{r_n}>u_n(\tau)\right\rbrace.
\end{equation*}

Now, $\{x \in [0,1]: X_{r_n}(x)>u_n(\tau)\}=G^{-r_n}(\Lambda_{j_{n,\tau}} \cup H^{(j_{n,\tau})})$.

First note that for $t\in [0, 1]$, $\p(Y_0\le t)  =\p(F\circ\psi\le t)=t$ by Lemma~\ref{lem:proper-psi}(c)(iii), so we write this uniform distribution by $U$.  For $j \geq 1$, if $G^{r_n}(x) \in \Lambda_{j_{n,\tau}} \cup H^{(j_{n,\tau})}$ then $G^{r_n+j}(x) \in \Lambda_{j_{n,\tau}-j} \cup H^{(j_{n,\tau}-j)}$ provided\\ $j_{n,\tau}-j \geq 0$, in which case $\psi(G^{r_n+j}(x))=2^j\psi(G^{r_n}(x))$ giving
\begin{equation*}
\begin{split}
\dfrac{F(\psi(G^{r_n+j}(x)))}{F(\psi(G^{r_n}(x)))}=\dfrac{\mathbb{P}(\psi \leq \psi(G^{r_n+j}(x)))}{\mathbb{P}(\psi \leq \psi(G^{r_n}(x)))}=\dfrac{\m(\Lambda_{j_{n,\tau}-j} \cup H^{(j_{n,\tau}-j)})}{\m(\Lambda_{j_{n,\tau}} \cup H^{(j_{n,\tau})})}&=\left(\displaystyle\sum_{i=1}^{k}\dfrac{1}{m_i}\right)^{-j}\\
&=(1-\theta)^{-j}
\end{split}
\end{equation*}

We have
\begin{equation*}
\lim_{n \to \infty}\mathbb{P}\left(\left\lbrace\dfrac{u_n^{-1}(X_{r_n+j})}{u_n^{-1}(X_{r_n})}=(1-\theta)^{-j} \given X_{r_n}>u_n(\tau)\right\rbrace\right)=1
\end{equation*}
(since $j_{n,\tau}-j \geq 0$ when $n \to \infty$).

We may write
\begin{equation*}
\begin{split}
& \hspace{-1cm} \lim_{n \to \infty}\mathbb{P}\left\lbrace\dfrac{1}{\tau}u_n^{-1}(X_{r_n+j})=s \given X_{r_n}>u_n(\tau)\right\rbrace\\
&=\lim_{n \to \infty}\mathbb{P}\left\lbrace\dfrac{u_n^{-1}(X_{r_n+j})}{u_n^{-1}(X_{r_n})}\dfrac{u_n^{-1}(X_{r_n})}{\tau}=s \given X_{r_n}>u_n(\tau)\right\rbrace
\end{split}
\end{equation*}
which, by (2) in in Section~\ref{ssec:seqtail}  holds, gives $\mathcal{L}(Y_j)=\mathcal{L}(\Theta_jY_0)=(1-\theta)^{-j}.U$ (a.s.).

Let $j<0$. If $G^{r_n}(x) \in \Lambda_{j_{n,\tau}} \cup H^{(j_{n,\tau})}$ then $G^{r_n+j}(x) \in \Lambda_{j_{n,\tau}-j} \cup H^{(j_{n,\tau}-j)}$ for an at most finite number of indices $j \geq -m$ where $m \in \mathbb{N}$. In words, a hit, at time $r_n$, to the approximation (of $\Lambda$) $\Lambda_{j_{n,\tau}} \cup H^{(j_{n,\tau})}$ can only be preceded by a finite number of hits to the finer approximations $\Lambda_{j_{n,\tau}-j} \cup H^{(j_{n,\tau}-j)}$, as, otherwise, the point would be in $\Lambda$. This leads to $Y_j=U.(1-\theta)^{-j}$ for an at most finite number of indices $j \geq -m$ where $m \in \mathbb{N}$.

If $G^{r_n}(x) \in \Lambda_{j_{n,\tau}} \cup H^{(j_{n,\tau})}$ is such that $G^{r_n+k}(x) \in [0,1] \setminus \Lambda_1$ for some $k \in \mathcal{K} \subseteq \{1,\dots,-j\}$, we have
\begin{equation*}
\dfrac{\psi(G^{r_n+j}(x))}{\psi(G^{r_n}(x))}=\dfrac{\sum_{k \in \mathcal{K}} 2^{-k} + 2^j\psi(G^{r_n}(x))}{\psi(G^{r_n}(x))}
\end{equation*}
so that
\begin{equation*}
\dfrac{F(\psi(G^{r_n+j}(x)))}{F(\psi(G^{r_n}(x)))}=\dfrac{\m([0,1] \setminus \Lambda_1)}{\m(\Lambda_{j_{n,\tau}} \cup H^{(j_{n,\tau})})}.
\end{equation*}

This leads to
\begin{equation*}
\lim_{n \to \infty}\mathbb{P}\left\lbrace\dfrac{u_n^{-1}(X_{r_n+j})}{u_n^{-1}(X_{r_n})}=\infty \given X_{r_n}>u_n(\tau)\right\rbrace=1.
\end{equation*}
So, $Y_j=\infty$.
\end{proof}

\subsection{The $1< \alpha<2$ case}  
To prove our result in the case $1< \alpha<2$, we are required to satisfy the small jumps condition \eqref{eq:small-jumps} and
we will also need to check that the sequence $(Q_j)_{j\in\Z}$, obtained from the spectral decomposition of the transformed anchored tail process, satisfies the assumption \eqref{eq:Q-condition->1}.
Since we have geometric decay of $\mathcal{ Q}_j$ (from geometric growth of $(\mathbf{\tilde{Q}}_{i, j})_j$), this latter condition holds in our case.

For the small jumps condition, we will now follow the proof strategy of \cite{CheNicTor24}.  Note that they did not cover the case $\alpha=1$, so we will also omit this. 
As is well known (see \cite{D83}), to show \eqref{eq:small-jumps} it is sufficient to show that for $Y_j= X_j\1_{X_j\leq \varepsilon \mathfrak{a}_n}- \E\left(X_j\1_{X_j\leq \varepsilon \mathfrak{a}_n}\right)$, that 
$$\lim_{\eps\to 0}\limsup_{n\to \infty} \frac n{\mathfrak{a}_n^2}\sum_{j=1}^n\max\left\{0, \E(Y_1Y_j)\right\}=0.$$

As in \cite{CheNicTor24}, we define $\phi_{n,\eps} = \phi_\alpha\cdot \1_{ \phi_\alpha\leq \varepsilon \mathfrak{a}_n}$ and it is sufficient to show that

$$\lim_{\eps\to 0}\limsup_{n\to \infty} \frac n{\mathfrak{a}_n^2}\sum_{j=1}^n\int |\phi_{n, \eps}|\cdot |\phi_{n, \eps}\circ G^j|~d\m=0.$$

In that paper they use the decay of correlations against $BV$, which we also have here.  By  Lemma~\ref{lem:proper-psi}(c)(ii) that the set $\{\phi_\alpha<t\} = \Lambda_k$ for $k\approx \frac{-\alpha\log t}{\log\lambda}$, so the complexity here is polynomial.  That is, there exists $\beta>1$ such that 
$$\|\1_{\{\phi_\alpha<t\}}\|_{BV} \le Kt^\beta.$$

This is useful for us since if we have exponential decay of correlations at rate $\theta$, as in \cite{CheNicTor24} we need only show
$$\lim_{\eps\to 0}\limsup_{n\to \infty} \frac n{\mathfrak{a}_n^2}\sum_{j=1}^{b\log n}\int |\phi_{n, \eps}|\cdot |\phi_{n, \eps}\circ G^j|~d\m=0$$
since we can choose $b$ large enough so that 
$$\frac{n}{\mathfrak{a}_n^2} \|\phi_{n, \eps}\|_{BV}\|\phi_{n, \eps}\|_{L^1}e^{-cj} \lesssim \frac{n}{\mathfrak{a}_n^2} (\mathfrak{a}_n\eps)^{\beta+1}e^{-\theta j} \approx n^{1+\frac1\alpha (\beta-1)} e^{-\theta j} $$
is bounded for $j= b\log n$.

For the rest of the argument, we can follow the steps of the proof of \cite[Theorem 8.1]{CheNicTor24}.   For the estimates (II) and (III) there, the argument is identical.  For (I) we observe that in the setting of this paper we are always in Case 1, and the derivative is constant 3, so we get an exponentially decaying constant in front of $\int\phi_{n, \eps}^2~d\m$, so the proof then follows.

\subsection{Conclusion of Theorem~\ref{thm:REPP-conv}}

We have shown that our observable has regularly varying tails, a well-defined anchored process, that the system satisfies the relevant dependence/mixing conditions and in the case $1<\alpha<2$, we have the required small jumps condition.  Hence Theorem~\ref{thm:REPP-conv} (and hence, as in \cite{FFT25}, also Theorem~\ref{thm:enriched-conv}) is proved.

\end{document}